\documentclass[12pt,a4paper,reqno]{amsart}
\usepackage[T1]{fontenc}
\usepackage[utf8]{inputenc}
\usepackage{lmodern}
\usepackage{amsmath,amssymb,amsthm}
\usepackage[margin=27mm,includeheadfoot]{geometry}
\usepackage[expansion=false]{microtype}
\usepackage{hyperref}
\allowdisplaybreaks[1]
\numberwithin{equation}{section}
\newtheorem{theorem}{Theorem}[section]
\newtheorem{proposition}[theorem]{Proposition}
\newtheorem{lemma}[theorem]{Lemma}
\newtheorem{corollary}[theorem]{Corollary}
\theoremstyle{remark}
\newtheorem{remark}{Remark}[section]
\newcommand{\T}{\mathbb T}
\newcommand{\R}{\mathbb R}
\newcommand{\Z}{\mathbb Z}
\newcommand{\N}{\mathbb N}
\newcommand{\W}{\mathcal W}
\newcommand{\Maj}{\mathcal M}
\newcommand{\Min}{\mathfrak m}
\DeclareMathOperator{\dist}{dist}

\title[Maximal estimates for higher-degree Weyl sums]{Improved $L^p$ maximal estimates
for higher-degree Weyl sums}

\author{Xuezhi Chen}
\address{School of Mathematics and Physics, University of Science and Technology Beijing, Beijing 100083, China}
\email{chen\_xuezhi@ustb.edu.cn}

\author{Changxing Miao}
\address{School of Mathematics and Physics, University of Science and Technology Beijing, Beijing 100083, China}
\email{miao\_changxing@ustb.edu.cn}

\author{Jiye Yuan}
\address{School of Science, China University of Geosciences (Beijing), Beijing 100083, China}
\email{yuan\_jiye@cugb.edu.cn}

\author{Tengfei Zhao}
\address{School of Mathematics and Physics, University of Science and Technology Beijing, Beijing 100083, China}
\email{zhao\_tengfei@ustb.edu.cn}

\date{}
\subjclass[2020]{11L15, 11P55, 42B25}
\keywords{Weyl sums, maximal estimates, minor arcs, Vinogradov mean value theorem, complete exponential sums}
\begin{document}
\begin{abstract}
For each integer $k\ge3$, we establish the following estimate for all
$1\le p<\infty$ and $\varepsilon>0$:
\[
\left\|\sup_{t\in\T}\left|\sum_{n=1}^N e(nx+n^kt)\right|\right\|_{L^p(\T)}
\ll_{k,p,\varepsilon}
N^{\max\{1-1/K_k,\,1-1/p\}+\varepsilon},
\]
where $K_k=\min\{2^{k-1},k(k+1)/2\}$, $e(z)=e^{2\pi iz}$, and $\T=\R/\Z$. For $k\ge5$, this improves our earlier bound~\cite{chen-miao-yuan-zhao} for $1\le p<D_k$, where $D_k=\min\{2^{k-1},k(k-1)\}$, and, together with the corresponding lower bound, extends the sharp range up to $N^\varepsilon$ from $p\ge D_k$ to $p\ge K_k$. The main new ingredient is a mixed minor-arc moment estimate obtained before taking the time supremum. The minor-arc maximal estimate then follows from a smooth reproducing kernel argument, while the major arcs are handled directly by Poisson summation and classical complete exponential-sum estimates. The reduction from suprema to mean values also yields a short and direct proof of improved $L^p$ estimates for maximal operators on Weyl sums with general polynomial phases, strengthening~\cite[Theorem~1.1]{baker-chen-shparlinski}.
\end{abstract}
\maketitle

\section{Introduction}\label{sec:introduction}
Let $\T=\R/\Z$, with Haar measure of total mass one, and write
$e(z)=\exp(2\pi iz)$. Fix an integer $k\ge3$. For $N\in\N$, define
\begin{equation}\label{eq:definition}
 S_N(x,t)=\sum_{n=1}^N e(nx+n^kt),\qquad
 \W_N(x)=\sup_{t\in\T}|S_N(x,t)|.
\end{equation}
We suppress the fixed degree $k$ in $S_N$ and $\W_N$, and put
\[
 K_k=\min\left\{2^{k-1},\frac{k(k+1)}2\right\}.
\]

For the quadratic phase, Barron~\cite{barron} proved
\[
 \left\|\sup_{t\in\T}\left|\sum_{n=1}^N e(nx+n^2t)\right|\right\|_{L^4(\T)}
 \ll_\varepsilon N^{3/4+\varepsilon},
\]
which is sharp up to $N^\varepsilon$. His proof uses induction on scale
and a version of parabolic rescaling. It also exploits the bounded
overlap of the spatial projections of rectangles selected around
maximizing points.
Baker~\cite{baker-quadratic} subsequently determined the order of magnitude
of the corresponding $L^p$ norm for every $1\le p<\infty$; at $p=4$,
the order is $N^{3/4}(\log N)^{1/4}$.
His upper-bound argument restricts attention to large values and uses
their simultaneous rational approximation structure, followed by
estimates for complete Gauss sums and oscillatory integrals.
Miao, Yuan and Zhao~\cite{miao-yuan-zhao-quadratic} extended these
quadratic maximal estimates to $\T^d$ for $d\ge2$, obtaining bounds
that are sharp up to $N^\varepsilon$, with Barron further reducing the
$N^\varepsilon$ loss to a power of $\log N$ through an alternative proof
in the appendix.

Baker, Chen and Shparlinski~\cite{baker-chen-shparlinski} studied
maximal operators on Weyl sums with general polynomial phases.
To state their results and fix notation for later use, let $\Gamma$ be a
nonempty proper subset of
$\{1,\ldots,k\}$, and put
\begin{equation}\label{eq:group-parameters}
 \begin{gathered}
 \Delta=\{1,\ldots,k\}\setminus \Gamma,\qquad m=|\Gamma|,\qquad
 s_k=\frac{k(k+1)}2,\\
 \tau_\Gamma=\sum_{j\in \Gamma}j,\qquad \sigma_\Delta=\sum_{j\in \Delta}j=s_k-\tau_\Gamma.
 \end{gathered}
\end{equation}
Write $\T^\Gamma=\prod_{j\in \Gamma}\T$ and similarly for $\T^\Delta$.
For $\boldsymbol\alpha=(\alpha_1,\ldots,\alpha_k)\in\T^k$, write
$\boldsymbol\alpha_\Gamma=(\alpha_j)_{j\in\Gamma}\in\T^\Gamma$ and
$\boldsymbol\alpha_\Delta=(\alpha_j)_{j\in\Delta}\in\T^\Delta$. We define
\begin{equation}\label{eq:group-definition}
 \begin{aligned}
 F_N(\boldsymbol\alpha)
 &=\sum_{n=1}^N e\!\left(\sum_{j=1}^k\alpha_jn^j\right),\\
 V_{\Gamma,N}(\boldsymbol\alpha_\Gamma)
 &=\sup_{\boldsymbol\alpha_\Delta\in\T^\Delta}|F_N(\boldsymbol\alpha)|,\qquad
 \mathsf M_{\Gamma,p}(N)=\|V_{\Gamma,N}\|_{L^p(\T^\Gamma)}.
 \end{aligned}
\end{equation}
For every $1\le p<\infty$ and $\varepsilon>0$, the estimate
in~\cite[Theorem~1.1]{baker-chen-shparlinski}, together with norm
monotonicity for $p<2s_k+k-m$, gives
\begin{equation}\label{eq:intro-bcs-mean}
 \mathsf M_{\Gamma,p}(N)
 \ll_{k,p,\varepsilon}
 N^{\max\{1-\tau_\Gamma/(2s_k+k-m),\,1-\tau_\Gamma/p\}+\varepsilon}.
\end{equation}
The matching lower bound $\mathsf M_{\Gamma,p}(N)\gg_{k,p}N^{1-\tau_\Gamma/p}$
shows that the exponent is sharp up to $N^\varepsilon$ for
$p\ge2s_k+k-m$. The estimate
in~\cite[Theorem~1.2]{baker-chen-shparlinski} also gives
\begin{equation}\label{eq:intro-bcs-weyl}
 \mathsf M_{\Gamma,p}(N)
 \ll_{k,p,\varepsilon}
 N^{\max\{1-2^{1-k},\,1-\tau_\Gamma/p\}+\varepsilon}.
\end{equation}
The latter has a smaller exponent than~\eqref{eq:intro-bcs-mean}
when $2^{k-1}\tau_\Gamma<2s_k+k-m$ and $1\le p<2s_k+k-m$.
Their arguments use mean value estimates and pointwise Weyl sum bounds,
together with the rational approximation structure of large values and
bounds for complete exponential sums.

The preceding general results also yield bounds for the Weyl sums
in~\eqref{eq:definition}, but the special form of the phase permits
further improvements. In our earlier
work~\cite[Theorem~1.1]{chen-miao-yuan-zhao}, we proved that
\begin{equation}\label{eq:previous}
 \|\W_N\|_{L^p(\T)}
 \ll_{k,p,\varepsilon}
 N^{\max\{1-1/D_k,\,1-1/p\}+\varepsilon},\qquad
 D_k=\min\{2^{k-1},k(k-1)\},
\end{equation}
for $k\ge3$ and $1\le p<\infty$.
The two entries in $D_k$ come from pointwise Weyl sum estimates:
$2^{k-1}$ corresponds to classical Weyl differencing, whereas $k(k-1)$
comes from Vinogradov mean value theory; see
\cite[Remark~1.5]{chen-miao-yuan-zhao}.
The argument in that work refines the rational approximation of large values using
the power-free and power-full structure of the denominator, obtains
level-set bounds, and then integrates these bounds to estimate the
$L^p$ norm~\cite[Sections~2--3]{chen-miao-yuan-zhao}.

The bounds in~\cite{chen-miao-yuan-zhao} are sharp up to $N^\varepsilon$
for $k=3$ and, for $k\ge4$, in the range $D_k\le p<\infty$.
Our main result improves the upper bounds for $k\ge5$ as follows:

\begin{theorem}\label{thm:main}
Let $k\ge3$ be an integer. For every $1\le p<\infty$ and
$\varepsilon>0$,
\begin{equation}\label{eq:main}
 \|\W_N\|_{L^p(\T)}
 \ll_{k,p,\varepsilon}
 N^{\max\{1-1/K_k,\,1-1/p\}+\varepsilon}.
\end{equation}
\end{theorem}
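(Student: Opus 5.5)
By monotonicity of $L^p$ norms on a probability space, it suffices to prove the bound at $p=K_k$,
\[
\|\W_N\|_{L^{K_k}(\T)}\ll N^{1-1/K_k+\varepsilon},
\]
together with the bound for large $p$. For $p\ge K_k$ the claim follows by interpolating with the trivial bound $\W_N\le N$ once we have the level-set estimate
\[
|\{x:\W_N(x)>\lambda\}|\ll N^{\varepsilon}(N/\lambda)\lambda^{-1}\cdots
\]
More simply, I will establish directly the sharp bound $\|\W_N\|_{L^p}\ll N^{1-1/p+\varepsilon}$ for all $p\ge K_k$. Following the abstract, I split $\T^2$ into major and minor arcs for the pair $(x,t)$. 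Put $Q=N^{\delta}$ for a small $\delta>0$. The major arcs are the sets
\[
\Maj=\{(x,t):|x-a/q|\le Q/(qN),\ |t-b/q|\le Q/(qN^k),\ q\le Q\},
\]
and $\Min$ is the complement. Accordingly $\W_N\le \W^{\Maj}+\W^{\Min}$, where each piece is the supremum over $t$ restricted to the corresponding set.

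\emph{Major arcs.} On $\Maj$, Poisson summation together with the Weyl/Hua bound $|S(q,a,b)|\ll q^{1-1/k+\varepsilon}$ for complete sums gives
\[
S_N(x,t)\approx q^{-1}S(q,a,b)\,I(x-a/q,t-b/q),
\]
where $I$ is the oscillatory integral. This yields
\[
\W^{\Maj}(x)\ll \sum_{q\le Q}\max_a q^{-1/k+\varepsilon}N(1+N|x-a/q|)^{-1/k},
\]
with the $x$-decay coming from van der Corput applied to $I$ after taking the sup in $t$. Integrating the $p$-th power over $x$ and summing over $a,q$ gives $N^{p-1+\varepsilon}$ whenever $p>k$, which covers $p\ge K_k$ (note $K_k\ge k+1$ for $k\ge3$). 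For smaller $p$ the contribution is even better after Hölder.

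\emph{Minor arcs.} This is the main step. I first prove a mixed moment estimate without the supremum:
\[
\int_{\T}\int_{\T}|S_N(x,t)|^{2s}\,\mathbf 1_{\Min}(x,t)\,dt\,dx
\]
with $s$ chosen so that $2s\approx K_k$. When $K_k=2^{k-1}$, I would use Weyl differencing, which gives the pointwise bound $\sup_{\Min}|S_N|\ll N^{1-2^{1-k}+\varepsilon}$. This is combined with the Hua-type moment $\int\int|S_N|^{2^{j}}\ll N^{2^{j}-j+\varepsilon}$ in the two variables $x,t$; only degrees $1$ and $k$ appear, so a trivial count gives
\[
\int\int|S_N|^{2}\,dx\,dt=N .
\]
When $K_k=s_k$, I would use Vinogradov mean value theorem (Bourgain--Demeter--Guth / Wooley): embed into the full $k$-dimensional system, so that $\int|F_N|^{2s_k}\ll N^{s_k+\varepsilon}$, and transfer to the two-variable integral by the standard averaging over the missing coefficients (shifting $n\mapsto n+h$ generates the intermediate monomials). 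The target in either case is
\[
\int\int_{\Min}|S_N|^{K_k}\ll N^{K_k-1-k+\varepsilon}\cdot N^{k}\cdot N^{-\eta}
\]
in the normalized sense, i.e.\ a bound of size $N^{K_k-k-1}$ times a power saving relative to the dual scale $N^{-k}$ in $t$.

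\emph{From moment to supremum.} I would then remove the supremum in $t$ using a smooth reproducing kernel. Write
\[
S_N=S_N*_t\psi_N ,
\]
where $\psi_N$ is a Schwartz bump whose Fourier transform equals $1$ on frequencies $[1,N^k]$. Then
\[
\sup_t|S_N(x,t)|^{K_k}\le \|\psi_N\|_{L^{K_k'}}^{K_k}\int|S_N(x,t')|^{K_k}\,dt'\approx N^{k}\int|S_N(x,t')|^{K_k}\,dt' ,
\]
by Hölder. The minor-arc restriction has to be compatible with this convolution, so I make the minor arcs slightly enlarged in $t$ at scale $N^{-k}$, and treat the tails of $\psi_N$ via the major-arc analysis. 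Integrating in $x$ gives
\[
\|\W^{\Min}\|_{K_k}^{K_k}\ll N^{k}\cdot N^{K_k-k-1+\varepsilon}=N^{K_k-1+\varepsilon}.
\]

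The main obstacle is the mixed minor-arc moment. The delicate point is that the Vinogradov bound loses nothing only over the full $s_k$-dimensional torus, while here we integrate over just two coefficients and must also exploit the minor-arc condition in order to recover the factor $N^{-k}\cdot N^{-1}$. I would first try bounding
\[
|S_N|^{K_k}\le|S_N|^{K_k-2}\cdot|S_N|^2
\]
on the minor arcs, using the pointwise minor-arc bound together with a large-sieve count of the $x$-frequencies; if that proves too lossy, I would instead use the complete Vinogradov system with averaging over $\alpha_2,\dots,\alpha_{k-1}$.
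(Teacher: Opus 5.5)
There is a genuine gap, and it sits at the step you flag as the main obstacle. Your reproducing-kernel step needs $\iint_{\Min}|S_N|^{K_k}\,dx\,dt\ll N^{K_k-k-1+\varepsilon}$. (Your displayed ``target'' $N^{K_k-1-k+\varepsilon}N^{k}N^{-\eta}$ is a different and much weaker quantity, useless once multiplied by $N^k$.)

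\textbf{The target bound.} For $k=3,4$ this bound is false. The diagonal solutions give $\iint_{\T^2}|S_N|^{K_k}\gg N^{K_k/2}>N^{K_k-k-1}$. Your major arcs have measure $\ll Q^3N^{-k-1}$, so they cannot carry that mass. For $k\ge5$ the bound would be an essentially sharp mean value for the degrees $\{1,k\}$ at moment order $K_k=s_k$, half the Vinogradov critical order, and nothing you cite provides it.

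\textbf{What your tools actually give.} The Vinogradov mean value theorem plus orthogonality in $\alpha_2,\dots,\alpha_{k-1}$ gives the order-$2K_k$ bound $\iint|S_N|^{2K_k}\ll N^{\sum_{j=2}^{k-1}j}J_{K_k,k}(N)\ll N^{2K_k-k-1+\varepsilon}$. Feed this into the reproducing kernel at order $2K_k$, where $\|\psi_N\|_{L^{(2K_k)'}}^{2K_k}\approx N^k$. The result is only $\|\W_N\|_{L^{2K_k}}\ll N^{1-1/(2K_k)+\varepsilon}$, which is Theorem~\ref{thm:group-maximal} with $\Gamma=\{1\}$ (Remark~\ref{rem:group-scope}), not the exponent $1-1/K_k$. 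Your fallback via $|S_N|^{K_k-2}|S_N|^2$ is weaker still. It needs a pointwise minor-arc saving of about $N^{-k/(K_k-2)}\approx N^{-2/k}$, while only $N^{-2^{1-k}}$ or roughly $N^{-1/(k(k-1))}$ is available.

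\textbf{The missing idea.} You need to save exactly one power of $N$ in the order-$2K_k$ moment on Wooley-type minor arcs $\Min(X)=\{t:\|qt\|_{\T}>X^{1-k}\text{ for all }1\le q\le X\}$. The required bound is $\int_{\Min(X)}\int_{\T}|S_X|^{2K_k}\ll X^{2K_k-k-2+\varepsilon}$ (Proposition~\ref{prop:mixed}). The mechanism is Wooley's translation argument:
\begin{itemize}
\item Insert Dirichlet kernels $H_j$ for the missing coefficients. The substitution $n\mapsto n+y$ fixes $t$ but moves the argument of $H_{k-1}$ to $\xi_{k-1}+kyt$.
\item Complete the shifted range inside $[1,2X]$ with a Dirichlet kernel in the linear coefficient. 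This costs only $(\log X)^{2K_k}$ and makes everything else independent of $y$, so you may average over $1\le y\le X$.
\item For $t\in\Min(X)$ the points $kyt$ are $X^{1-k}$-separated within blocks of length about $X/k$. Hence $X^{-1}\sum_{y\le X}\min\{X^{k-1},\|kyt+\alpha\|_{\T}^{-1}\}\ll X^{k-2}\log X$.
\end{itemize}
The reproducing kernel at order $2K_k$ then gives $(N^{k}\cdot N^{2K_k-k-2})^{1/(2K_k)}=N^{1-1/K_k}$ for the minor-arc supremum in $L^{2K_k}$. H\"older then passes down to $L^{K_k}$.

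\textbf{Consequences for the arcs.} The saving uses the Diophantine condition for every $q\le X$, so the minor arcs must be defined in $t$ alone. Your joint arcs with $q\le N^\delta$ leave $t$ near $b/q$ with $N^\delta<q\le N$ on the minor side. There the points $kyt$ cluster near at most $q$ values, and the average saves only about $q$. So the major arcs must contain, for every $x$, all $t$ within $\asymp N^{1-k+\tau}/q$ of $a/q$ with $q\ll N$. The $N^\tau$ enlargement puts every remaining $t$ at distance $\gg N^{-k+\tau}$ from the unenlarged arcs, which makes the kernel tails negligible. On these major arcs, Poisson summation, $\sup_\beta|I_M(y,\beta)|\ll M(1+M|y|)^{-1/2}$ and $|c_q(a,b)|\ll q^{-1/k}$ lead to $\sum_{q\ll N}q^{1-r/k}$, which is bounded only for $r>2k$. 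This is why the argument is run at $r=K_k>2k$ and requires $k\ge5$. The cases $k=3,4$ (the only ones with $K_k=2^{k-1}$) follow from the earlier bound, since $D_k=K_k$ there; they do not come from a Weyl--Hua moment argument, which rests on the false order-$K_k$ target above.
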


We compare this estimate with our earlier result and discuss its sharpness.

\begin{remark}\label{rem:main-comparison}
\textup{(i)} For $k\ge5$, $K_k<D_k$, so~\eqref{eq:main} improves
\eqref{eq:previous} when $1\le p<D_k$ and has the same exponent when
$p\ge D_k$.

\noindent\textup{(ii)} The lower bound established in our earlier
work~\cite[Theorem~1.3 and Section~4]{chen-miao-yuan-zhao} gives
\begin{equation}\label{eq:lower}
 \|\W_N\|_{L^p(\T)}\gg_{k,p} N^{1-1/p}
 \qquad(4<p<\infty).
\end{equation}
For $k\ge4$, combining~\eqref{eq:lower} with~\eqref{eq:main} shows
that the upper bound is sharp up to $N^\varepsilon$ for $K_k\le p<\infty$.
For $k\ge5$, this extends the sharp range from $p\ge D_k$ to $p\ge K_k$.
\end{remark}

The cases $k=3,4$ follow from~\eqref{eq:previous}.
We now outline the proof of Theorem~\ref{thm:main} for $k\ge5$, so that
$K_k=k(k+1)/2$. Motivated by the rational approximation structure of
large values in~\cite{chen-miao-yuan-zhao}, we estimate the time supremum
separately on the major and minor arcs.
Adapting Wooley's translation and completion
argument~\cite{wooley,wooley-corrigendum} and applying the Vinogradov
mean value theorem~\cite{bourgain-demeter-guth,wooley-nested}, we obtain
\begin{equation}\label{eq:intro-moment}
 \int_{\Min(X)}\int_{\T}|S_X(x,t)|^{2K_k}\,dx\,dt
 \ll_{k,\varepsilon}X^{2K_k-k-2+\varepsilon},
\end{equation}
where $\Min(X)$ is defined in~\eqref{eq:wooley-arcs}.

We work with smooth Weyl sums to control the tails after Poisson
summation by repeated integration by parts. Partial summation transfers
\eqref{eq:intro-moment} to these sums on the common minor-arc set
$\Min_0(M)$. Enlarging the major-arc set $\Maj_0(M)$ to
$\Maj_\tau(M)$ then provides the separation needed for a reproducing
kernel in $t$ to yield the
minor-arc maximal estimate.

On $\Maj_\tau(M)$, we combine Poisson summation with the pointwise
estimate for complete sums in~\cite[Lemma~2.4]{baker-chen-shparlinski}.
This gives major-arc maximal estimates for every $r>2k$, which suffice
for Theorem~\ref{thm:main}. Appendix~\ref{app:fourth} gives a refinement
based on the estimates of Cochrane and Zheng~\cite{cochrane-zheng}.
For $k\ge4$, this refinement extends the range
to $r\ge4$ by interpolation, with an $M^\varepsilon$ loss for sufficiently small
$\tau=\tau(\varepsilon)>0$; in particular,
\begin{equation}\label{eq:intro-major-fourth}
 \left\|\sup_{t\in\Maj_\tau(M)}|S_M^w(\cdot,t)|\right\|_{L^4(\T)}
 \ll_{k,w,\varepsilon}M^{3/4+\varepsilon}.
\end{equation}
The exponent $3/4$ is sharp up to $M^\varepsilon$, but this refinement
does not improve the global $L^4$ bound obtained by combining the arc
estimates for $k\ge5$, which remains limited by the minor-arc exponent
$1-1/K_k$.

Combining the major- and minor-arc estimates at $r=K_k$ and choosing
$\tau$ sufficiently small in terms of $\varepsilon$, we obtain the global
$L^{K_k}$ bound for $\W_N$ by Fourier completion and a dyadic decomposition.
For $1\le p<K_k$, the desired estimate follows from the monotonicity of
$L^p$ norms on $\T$ with normalized measure. For $K_k<p<\infty$, it follows
by interpolating the $L^{K_k}$ estimate with the trivial bound
$\|\W_N\|_{L^\infty(\T)}\le N$.

Applying the reduction from suprema to mean values in this paper, we obtain
the following estimate for the general polynomial
maximal operators defined in~\eqref{eq:group-definition}, with the
notation and assumptions of~\eqref{eq:group-parameters}.

\begin{theorem}\label{thm:group-maximal}
Let $k\ge3$ be an integer. For every $1\le p<\infty$ and
$\varepsilon>0$,
\begin{equation}\label{eq:group-maximal}
 \mathsf M_{\Gamma,p}(N)
 \ll_{k,p,\varepsilon}
 N^{\max\{1-\tau_\Gamma/(2s_k),\,1-\tau_\Gamma/p\}+\varepsilon}.
\end{equation}
\end{theorem}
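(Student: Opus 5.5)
By monotonicity of $L^p$ norms on $\T^\Gamma$ (normalized measure) and interpolation with the trivial bound $V_{\Gamma,N}\le N$, it suffices to prove the endpoint estimate
\[
\|V_{\Gamma,N}\|_{L^{2s_k}(\T^\Gamma)}^{2s_k}\ll_{k,\varepsilon} N^{2s_k-\tau_\Gamma+\varepsilon}.
\]
Indeed, for $p>2s_k$ we write $\int V^p\le N^{p-2s_k}\int V^{2s_k}$, which gives $N^{1-\tau_\Gamma/p+\varepsilon}$. For $p<2s_k$, monotonicity gives $N^{1-\tau_\Gamma/(2s_k)+\varepsilon}$.

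The plan is to reduce the supremum over $\boldsymbol\alpha_\Delta$ to an average, using a reproducing kernel as in the minor-arc argument. The phase $\alpha_jn^j$ with $n\le N$ has frequency in $\alpha_j$ of size $\le N^j$. Replace $F_N$ by a smooth version $F^w_N$, with $w$ a bump on $[1,N]$, and pass back by partial summation at the cost of $N^\varepsilon$. Then $\alpha_\Delta\mapsto F_N(\boldsymbol\alpha)$ is a trigonometric polynomial with frequencies $(n^j)_{j\in\Delta}$ in the box $\prod_{j\in\Delta}[0,N^j]$. For such polynomials Bernstein's inequality, or equivalently writing $F=F*\Phi$ with $\Phi$ a de la Vallée-Poussin type kernel adapted to that box and applying Hölder, gives
\[
\sup_{\boldsymbol\alpha_\Delta}|F_N(\boldsymbol\alpha)|^{2s_k}\ll \Big(\prod_{j\in\Delta}N^j\Big)\int_{\T^\Delta}|F_N(\boldsymbol\alpha_\Gamma,\boldsymbol\beta)|^{2s_k}\,d\boldsymbol\beta = N^{\sigma_\Delta}\int_{\T^\Delta}|F_N|^{2s_k}\,d\boldsymbol\beta .
\]
This is the standard Plancherel–Pólya/Nikolskii inequality for $L^\infty$ versus $L^{q}$ on trigonometric polynomials: with a kernel $\Phi$ whose $L^{q'}$ norm is $\asymp(\prod N^j)^{1/q}$, we get $\|F\|_\infty\le\|F\|_q\|\Phi\|_{q'}$. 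The uniform $L^1$ bound of the de la Vallée-Poussin kernel makes this hold with the correct constant.

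Integrating over $\boldsymbol\alpha_\Gamma\in\T^\Gamma$ then gives
\[
\|V_{\Gamma,N}\|_{2s_k}^{2s_k}\ll N^{\sigma_\Delta}\int_{\T^k}|F_N(\boldsymbol\alpha)|^{2s_k}\,d\boldsymbol\alpha\ll_\varepsilon N^{\sigma_\Delta}N^{2s_k-s_k+\varepsilon}
\]
by the Vinogradov mean value theorem \cite{bourgain-demeter-guth,wooley-nested} at the critical exponent $2s_k$. Since $\sigma_\Delta+s_k=2s_k-\tau_\Gamma$, this is exactly $N^{2s_k-\tau_\Gamma+\varepsilon}$, as required.

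The main obstacle is making the Nikolskii step rigorous with the right constant $\prod_{j\in\Delta}N^j$ and no loss beyond $N^\varepsilon$. I would use tensor products of one-dimensional de la Vallée-Poussin kernels $V_{N^j}$, which reproduce frequencies in $[0,N^j]$ after modulation, have $\|V_{N^j}\|_1\ll1$ and $\|V_{N^j}\|_\infty\ll N^j$, so that $\|V_{N^j}\|_{q'}\ll (N^j)^{1/q}$. Since only integer frequencies $n^j\le N^j$ occur, no major/minor arc decomposition is needed here, unlike in Theorem~\ref{thm:main}. This is why the result is a direct consequence of the reduction from suprema to mean values together with the main conjecture in Vinogradov's mean value theorem.
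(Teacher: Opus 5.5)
Your proof is correct and follows essentially the paper's route. You first bound $V_{\Gamma,N}(\boldsymbol\alpha_\Gamma)^{2s_k}\ll N^{\sigma_\Delta}\int_{\T^\Delta}|F_N(\boldsymbol\alpha)|^{2s_k}\,d\boldsymbol\alpha_\Delta$ by a reproducing kernel, then apply the Vinogradov mean value theorem at $s=s_k$, monotonicity, and interpolation with $V_{\Gamma,N}\le N$.

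The only variation is in the kernel step. The paper applies a product Dirichlet kernel to the nonnegative polynomial $|F_N|^{2s_k}$, whose frequencies satisfy $|h_j|\le s_kN^j$, and needs only $\|\mathcal D_\Delta\|_{L^\infty}\ll N^{\sigma_\Delta}$ together with positivity. You instead apply de la Vall\'ee-Poussin kernels to $F_N$ itself and use H\"older with $\|\Phi\|_{L^{q'}}\ll N^{\sigma_\Delta/q}$; both work.

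Your smoothing step should simply be dropped. $F_N$ is already a trigonometric polynomial in $\boldsymbol\alpha_\Delta$ with frequencies in $\prod_{j\in\Delta}[1,N^j]$, which is all the argument uses. Also, partial summation passes from sharp sums to smooth ones, not back.
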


The lower bound proved in~\cite[Section~3.2]{baker-chen-shparlinski}
shows that the exponent in~\eqref{eq:group-maximal} is sharp up to
$N^\varepsilon$ for $p\ge2s_k$.

\begin{remark}\label{rem:group-comparison}
\textup{(i)} Compared with~\eqref{eq:intro-bcs-mean}, the bound
in~\eqref{eq:group-maximal} replaces $2s_k+k-m$ by $2s_k$ and strictly improves the exponent for
$1\le p<2s_k+k-m$. The argument in Section~\ref{sec:extensions} gives
a short and direct proof of this improvement
of~\cite[Theorem~1.1]{baker-chen-shparlinski}.

\noindent\textup{(ii)} Combining the improved inverse theorem discussed in
\cite[Section~9.1]{hu-short-intervals} with the large-value argument of
\cite[Lemma~2.9 and Section~3.3]{baker-chen-shparlinski} replaces
$2^{k-1}$ in~\eqref{eq:intro-bcs-weyl} by $D_k$ and yields
\[
 \mathsf M_{\Gamma,p}(N)
 \ll_{k,p,\varepsilon}
 N^{\max\{1-1/D_k,\,1-\tau_\Gamma/p\}+\varepsilon}.
\]
The bound in~\eqref{eq:group-maximal} is at least as strong when
$D_k\tau_\Gamma\ge2s_k$.
\end{remark}

The proof of Theorem~\ref{thm:group-maximal} uses a product of
one-dimensional Dirichlet kernels and the Vinogradov mean value theorem. The reproducing kernel argument extends
that of Section~\ref{subsec:smooth-minor} to groups of polynomial coefficients.
When $1\in \Delta$, the exponent $1-\tau_\Gamma/(2s_k)$ also appears in the
almost-everywhere estimate of Chen and
Shparlinski~\cite[Theorem~2.3]{chen-shparlinski-new-bounds}.

The paper is organized as follows.
Section~\ref{sec:mixed} establishes the mixed minor-arc estimate using
Wooley's translation argument and the Vinogradov mean value theorem.
Section~\ref{sec:proof} establishes the minor- and major-arc maximal estimates
and combines them to prove Theorem~\ref{thm:main}.
Section~\ref{sec:extensions} extends the reproducing kernel argument to general
polynomial phases and proves Theorem~\ref{thm:group-maximal}.
Appendix~\ref{app:fourth} uses the estimates of Cochrane and Zheng to
bound fourth moments of complete sums and obtain a stronger $L^4$
major-arc estimate.

We write $\|u\|_{\T}=\min_{j\in\Z}|u-j|$ for the distance to the nearest
integer, and $U\ll V$ to mean $|U|\le CV$ for some $C>0$.
Implicit constants may depend on $k$, fixed smooth functions and fixed
parameters in the statement, but are independent of the summation length
and rational denominators. All integrals over tori use normalized Haar
measure. For $f\in L^1(\R)$, we use the Fourier transform convention
$\widehat f(\xi)=\int_{\R}f(u)e(-u\xi)\,du$.
For $f\in L^1(\T)$, the same notation denotes its Fourier coefficients,
$\widehat f(m)=\int_{\T}f(u)e(-mu)\,du$ for $m\in\Z$.
The supremum of a nonnegative function over the empty set is defined
to be zero.


\subsection*{AI-assisted preparation}
OpenAI's ChatGPT (GPT-5.6 Sol) and Codex (GPT-6 Astra) were used
to assist with mathematical exploration,
literature searches, exposition, and \LaTeX{} editing during the preparation
of this manuscript. The authors have verified all proofs and references
and revised the language and structure of the manuscript. They take full
responsibility for its final content.

\section{A mixed moment estimate on the minor arcs}\label{sec:mixed}
Throughout Sections~\ref{sec:mixed} and~\ref{sec:proof}, we assume
$k\ge5$, so that $K_k=k(k+1)/2$.
For an integer $X\ge2$, let
\begin{equation}\label{eq:wooley-arcs}
 \Min(X)=\left\{t\in\T:
 \|qt\|_{\T}>X^{1-k}
 \text{ for every integer }1\le q\le X\right\}.
\end{equation}
This definition is similar to that in~\cite[Section~2]{wooley}.
For $\boldsymbol a\in\T^k$ and a positive integer $s$, define
\begin{equation}\label{eq:vmvt-definition}
 f_X(\boldsymbol a)=\sum_{n=1}^X e(a_1n+\cdots+a_kn^k),
 \qquad
 J_{s,k}(X)=\int_{\T^k}|f_X(\boldsymbol a)|^{2s}\,d\boldsymbol a.
\end{equation}
We use the translation argument of Wooley~\cite[Section~2]{wooley}
with the interval completion in~\cite{wooley-corrigendum}, retaining
the integration in the linear coefficient. The following proof estimates
the mixed moment directly. The translation-kernel bound follows from
the spacing of the translated phases implied by the minor-arc condition.

\begin{proposition}\label{prop:mixed}
For $k\ge5$, $s\in\N$ and integers $X\ge2$,
\begin{equation}\label{eq:mixed-general}
 \int_{\Min(X)}\int_{\T}|S_X(x,t)|^{2s}\,dx\,dt
 \ll_{k,s}
 X^{K_k-k-2}(\log(2X))^{2s+1}J_{s,k}(2X).
\end{equation}
\end{proposition}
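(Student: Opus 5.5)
Wooley's translation-invariance argument should give this, with the linear coefficient $x$ integrated throughout. For integers $0\le h\le X$ (roughly $X$ shifts $h$), the binomial expansion gives
\[
 S_X(x,t)=\sum_{n=1+h}^{X+h} e\bigl((n-h)x+(n-h)^kt\bigr)
 = \sum_{n=1+h}^{X+h}e\bigl(\psi_h(n;x,t)\bigr),
\]
where $\psi_h$ is a polynomial in $n$ whose $n^k$ coefficient is $t$ and whose $n^{k-1}$ coefficient is $-kht$. The lower coefficients are $\binom kj(-h)^{k-j}t$, and the linear coefficient also contains $x$. Complete the interval $[1+h,X+h]$ to $[1,2X]$ with a Dirichlet kernel in an auxiliary variable $\beta$. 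This costs a factor $\int_\T \min(X,\|\beta\|_\T^{-1})\,d\beta\ll \log(2X)$ per copy of the sum, which produces $(\log 2X)^{2s}$ in total, as in~\cite{wooley-corrigendum}. We obtain
\[
 |S_X(x,t)|^{2s}\ll (\log 2X)^{2s-1}\int_\T \kappa(\beta)\,|f_{2X}(\boldsymbol a_h(x,t)+\beta e_1)|^{2s}\,d\beta
\]
with $\kappa(\beta)=\min(X,\|\beta\|^{-1})$ and $e_1$ the first coordinate direction. Averaging over $h$ gives $|S_X|^{2s}\ll X^{-1}(\log)^{2s-1}\sum_h\int\kappa|f_{2X}(\cdots)|^{2s}$.

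Next, integrate over $x\in\T$ and $t\in\Min(X)$. The $x$- and $\beta$-translations are absorbed by the coordinate $a_1$: integrating in $x$ makes $a_1$ uniform, and the $\beta$-integral contributes one further logarithm, giving the total $(\log)^{2s+1}$ once the $h$-sum is counted carefully. We are left with
\[
 X^{-1}(\log 2X)^{2s+1}\int_{\T^{k-1}} |f_{2X}(a_1,\boldsymbol a')|^{2s}\,\Big(\sum_{h}\int_{\Min(X)} \mathbf 1\bigl[\boldsymbol a'_h(t)\in \boldsymbol a'+\text{box}\bigr]dt\Big)\cdots
\]
To make this precise, smooth the point mass in the coordinates $a_2,\dots,a_k$. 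Use the fact that $f_{2X}$ is essentially constant on boxes of size $X^{-j}$ in $a_j$ (a local-constancy / large-sieve step, via a Fejér-type majorant). This bounds the integral by $J_{s,k}(2X)$ times the supremum over boxes of the measure of the set of pairs $(h,t)$, $t\in\Min(X)$, for which the map $(h,t)\mapsto(\binom kj(-h)^{k-j}t)_{2\le j\le k}$ lands in a fixed box $B$ with side lengths $X^{-j}$. The box has volume $X^{-(s_k-1)}$.

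The main obstacle is this translation-kernel count. The $a_k$ coordinate pins $t$ to an interval of length $X^{-k}$. The $a_{k-1}$ coordinate, $-kht$, then pins $h t$ modulo $1$ to width $X^{1-k}$. For two shifts $h\ne h'$ with the same $t$-window, $\|(h-h')t\|_\T\ll X^{1-k}$ with $1\le|h-h'|\le X$, which contradicts $t\in\Min(X)$. Hence each box receives at most one shift $h$ (or $O(1)$ of them), and the measure is $\ll X^{-k}$. Carrying out this spacing argument and comparing the measures gives the kernel bound. The exponent bookkeeping should then be: the factor $X^{-1}$ from averaging over $h$, times the box-normalisation factor $X^{s_k-1}$ ($=X^{\sum_{j\ge2}j}$) from smoothing, times the kernel bound $X^{-k}$, which yields $X^{s_k-k-2}=X^{K_k-k-2}$. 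I expect the delicate points to be twofold. First, the remaining coordinates $a_2,\dots,a_{k-2}$ are determined by $(h,t)$ and need no further separation; only the $a_k$ and $a_{k-1}$ constraints are used. Second, the smoothing must be arranged so that the weights $\kappa$ and the local-constancy majorant cost only logarithms.
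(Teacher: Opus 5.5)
Your proposal is essentially the paper's argument. It uses a shift, completion of the shifted interval to $\{1,\ldots,2X\}$ at a cost of $(\log 2X)^{2s}$, and the key observation that the degree-$(k-1)$ coefficient picks up $-kht$ (the paper's $kyt$), whose values are $X^{1-k}$-separated for $t\in\Min(X)$; this saves one power of $X$, while degrees $2,\ldots,k-2$ are bounded trivially. The main difference is that you lift to $\T^k$ by a reproducing kernel after translating, smoothing $a_k$ as well, whereas the paper inserts Dirichlet kernels in $a_2,\ldots,a_{k-1}$ before a measure-preserving coefficient change and keeps $t\in\Min(X)$ exact.

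There is one slip. Two shifts landing in the same $a_{k-1}$-window give $\|k(h-h')t\|_\T$ small, not $\|(h-h')t\|_\T$. So you must split the shifts into $O(k)$ blocks of length at most $X/k$, as the paper does, to invoke the minor-arc condition with $q=k|h-h'|\le X$; even then you need a single fixed $t$ (not two $t$'s in the same $X^{-k}$-window), and an $a_{k-1}$-window of total width at most $X^{1-k}$, to contradict $\|qt\|_\T>X^{1-k}$. On the logarithms: Dirichlet kernels in both $a_{k-1}$ and $a_k$ would give $(\log 2X)^{2s+2}$, so use a de la Vall\'ee Poussin kernel at least in $a_k$ to reach the stated power.
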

\begin{proof}
Put
\[
 I_s(X)=\int_{\Min(X)}\int_{\T}|S_X(x,t)|^{2s}\,dx\,dt.
\]
For $\boldsymbol n\in\{1,\ldots,X\}^{2s}$, write
$\sigma_j(\boldsymbol n)=\sum_{i=1}^s(n_i^j-n_{s+i}^j)$.
Expanding the integrand and interchanging the finite sum and the
integrals, we obtain
\[
 I_s(X)
 =\sum_{\boldsymbol n\in\{1,\ldots,X\}^{2s}}
 \left(\int_{\T}e\!\left(x\sigma_1(\boldsymbol n)\right)\,dx\right)
 \left(\int_{\Min(X)}e\!\left(t\sigma_k(\boldsymbol n)\right)\,dt\right).
\]
Since $|\sigma_j(\boldsymbol n)|\le sX^j$, orthogonality in the missing
coefficients $a_2,\ldots,a_{k-1}$ gives
\[
 1=\int_{\T^{k-2}}
 e\!\left(\sum_{j=2}^{k-1}a_j\sigma_j(\boldsymbol n)\right)
 \prod_{j=2}^{k-1}H_j(a_j)\,da_2\cdots da_{k-1},
\]
where
\[
 H_j(u)=\sum_{\substack{h\in\Z\\|h|\le sX^j}}e(-hu)
 \qquad(2\le j\le k-1).
\]
Inserting this identity into the preceding expression for $I_s(X)$ gives
\begin{equation}\label{eq:fourier-identity}
 I_s(X)=\int_{\mathcal U}|f_X(\boldsymbol a)|^{2s}
 \prod_{j=2}^{k-1}H_j(a_j)\,d\boldsymbol a,
\end{equation}
where $\boldsymbol a=(a_1,\ldots,a_k)=(x,a_2,\ldots,a_{k-1},t)$ and
$\mathcal U=\T^{k-1}\times\Min(X)$.

Estimating~\eqref{eq:fourier-identity} directly by
$|H_j(u)|\le2sX^j+1$ gives
\[
 \begin{aligned}
 I_s(X)
 &\le\left(\prod_{j=2}^{k-1}(2sX^j+1)\right)
 \int_{\T^k}|f_X(\boldsymbol a)|^{2s}\,d\boldsymbol a\\
 &\ll_{k,s}X^{\sum_{j=2}^{k-1}j}J_{s,k}(X)
 =X^{K_k-k-1}J_{s,k}(X).
 \end{aligned}
\]
For $s=K_k$, the Vinogradov mean value theorem therefore yields only
$I_{K_k}(X)\ll_{k,\varepsilon}X^{2K_k-k-1+\varepsilon}$ for every
$\varepsilon>0$, one power of $X$ short of~\eqref{eq:intro-moment}.

We use the minor-arc condition to improve this estimate by averaging
over translations of the summation variable.
After the corresponding change of coefficient variables, the leading
coefficient $t$ remains unchanged and the argument of $H_{k-1}$ depends
linearly on the translation parameter, with slope $kt$.

Denote $P_{\boldsymbol a}(z)=\sum_{j=1}^ka_jz^j$.
For each integer $1\le y\le X$, the change of summation variable
$m=n+y$ gives
\[
 f_X(\boldsymbol a)
 =\sum_{m=y+1}^{y+X}e\!\left(P_{\boldsymbol a}(m-y)\right).
\]
We expand the translated phase as
\[
 P_{\boldsymbol a}(z-y)
 =P_{\boldsymbol a}(-y)+\sum_{j=1}^k\xi_jz^j,
\]
where the coefficients $\xi_j$ and the inverse change of variables are
given by
\begin{equation}\label{eq:coefficient-translation}
 \begin{aligned}
 \xi_j&=\sum_{\ell=j}^k\binom{\ell}{j}(-y)^{\ell-j}a_\ell,
 &a_j&=\sum_{\ell=j}^k\binom{\ell}{j}y^{\ell-j}\xi_\ell
 \qquad(1\le j\le k).
 \end{aligned}
\end{equation}
Consequently,
\begin{equation}\label{eq:shifted-weyl-sum}
 f_X(\boldsymbol a)
 =e(P_{\boldsymbol a}(-y))
 \sum_{m=y+1}^{y+X}e\!\left(\sum_{j=1}^k\xi_jm^j\right).
\end{equation}
Thus $\xi_k=a_k=t$ and $a_{k-1}=\xi_{k-1}+kyt$.
In particular, the factor $H_{k-1}(a_{k-1})$ becomes
$H_{k-1}(\xi_{k-1}+kyt)$, whose value varies with $y$.

For each integer $1\le y\le X$, the change of variables
\eqref{eq:coefficient-translation} maps $\mathcal U$ onto itself and
preserves measure.\footnote{For fixed $y$ and $t$, the map on $\T^{k-1}$
is affine, with an upper triangular integer linear part having diagonal
entries one and an integer inverse given by~\eqref{eq:coefficient-translation}.
It is therefore invertible and preserves Haar measure. Since $\xi_k=t$,
it also preserves $\mathcal U$.}
We may therefore apply it in~\eqref{eq:fourier-identity}, writing
$\boldsymbol\xi=(\xi_1,\ldots,\xi_k)\in\mathcal U$ for the new
integration variable.

The summation interval in~\eqref{eq:shifted-weyl-sum} still depends on $y$. To obtain a common
expression that can be averaged in $y$, we embed all these intervals
in $\{1,\ldots,2X\}$ and recover each one by Fourier orthogonality.
Let
\[
 \mathcal D_X(\gamma)=\sum_{n=1}^Xe(-\gamma n),\qquad
 \boldsymbol e_1=(1,0,\ldots,0).
\]
For every integer $m$,
\[
 \int_\T e\!\left(\gamma(m-y)\right)\mathcal D_X(\gamma)\,d\gamma
 =\begin{cases}
 1,&y+1\le m\le y+X,\\
 0,&\text{otherwise}.
 \end{cases}
\]
Applying this identity to the shifted sum gives the completion formula
\begin{equation}\label{eq:translated-completion}
 f_X(\boldsymbol a)
 =e(P_{\boldsymbol a}(-y))
 \int_\T f_{2X}(\boldsymbol\xi+\gamma\boldsymbol e_1)
 e(-y\gamma)\mathcal D_X(\gamma)\,d\gamma.
\end{equation}
Substituting~\eqref{eq:translated-completion}
into~\eqref{eq:fourier-identity}, making the coefficient change
\eqref{eq:coefficient-translation}, and taking absolute values, we obtain,
for every $1\le y\le X$,
\begin{equation}\label{eq:mixed-completed}
 \begin{aligned}
 I_s(X)
 &=\left|\int_{\T^{2s}}\Phi_X(\boldsymbol\gamma)
 e\!\left(-y\sum_{i=1}^s(\gamma_i-\gamma_{s+i})\right)
 \int_{\mathcal U}G(\boldsymbol\xi,\boldsymbol\gamma)
 \mathcal K_y(\boldsymbol\xi)\,d\boldsymbol\xi\,d\boldsymbol\gamma\right|\\
 &\le\int_{\T^{2s}}|\Phi_X(\boldsymbol\gamma)|
 \int_{\mathcal U}|G(\boldsymbol\xi,\boldsymbol\gamma)|
 |\mathcal K_y(\boldsymbol\xi)|\,d\boldsymbol\xi\,d\boldsymbol\gamma,
 \end{aligned}
\end{equation}
where $\boldsymbol\gamma=(\gamma_1,\ldots,\gamma_{2s})\in\T^{2s}$ and
\[
 \begin{aligned}
 \Phi_X(\boldsymbol\gamma)
 &=\prod_{i=1}^s\mathcal D_X(\gamma_i)\overline{\mathcal D_X(\gamma_{s+i})},\\
 G(\boldsymbol\xi,\boldsymbol\gamma)
 &=\prod_{i=1}^s
 f_{2X}(\boldsymbol\xi+\gamma_i\boldsymbol e_1)
 \overline{f_{2X}(\boldsymbol\xi+\gamma_{s+i}\boldsymbol e_1)},\\
 \mathcal K_y(\boldsymbol\xi)
 &=\prod_{j=2}^{k-1}
 H_j\!\left(\sum_{\ell=j}^k
 \binom{\ell}{j}y^{\ell-j}\xi_\ell\right).
 \end{aligned}
\]
Note that both $\Phi_X$ and $G$ are independent of $y$.
Averaging this inequality over $1\le y\le X$ therefore replaces
$|\mathcal K_y(\boldsymbol\xi)|$ by
$X^{-1}\sum_{y=1}^X|\mathcal K_y(\boldsymbol\xi)|$.
We estimate this average using the minor-arc condition.

The bounds
\[
 |H_j(u)|\le2sX^j+1,\qquad
 |H_{k-1}(u)|\ll_s
 \min\{X^{k-1},\|u\|_\T^{-1}\}
\]
yield
\begin{equation}\label{eq:averaged-kernel}
 \begin{aligned}
 \frac1X\sum_{y=1}^X|\mathcal K_y(\boldsymbol\xi)|
 &\ll_{k,s}X^{\sum_{j=2}^{k-2}j}\cdot
 \frac1X\sum_{y=1}^X|H_{k-1}(\xi_{k-1}+kyt)|\\
 &\ll_{k,s}X^{\sum_{j=2}^{k-2}j}\cdot\Psi_X(t,\xi_{k-1}),
 \end{aligned}
\end{equation}
where
\[
 \Psi_X(t,\alpha)
 =\frac1X\sum_{y=1}^X
 \min\{X^{k-1},\|kyt+\alpha\|_\T^{-1}\}.
\]
We now prove the uniform estimate
\begin{equation}\label{eq:translation-kernel-bound}
 \sup_{\substack{t\in\Min(X)\\\alpha\in\T}}\Psi_X(t,\alpha)
 \ll_k X^{k-2}\log(2X).
\end{equation}
Partition $\{1,\ldots,X\}$ into consecutive blocks of length
$L=\lfloor X/k\rfloor+1$, except possibly the last block, which
may be shorter. The number of blocks is $\lceil X/L\rceil\le k$.
For distinct $y,y'$ in the same block, we have
$|y-y'|\le L-1=\lfloor X/k\rfloor$, and hence
\[
 1\le k|y-y'|\le X,
 \qquad
 \|k(y-y')t\|_\T>X^{1-k}=:\rho.
\]
The second inequality follows directly from $t\in\Min(X)$.
Thus the phases $kyt+\alpha$ in each block are $\rho$-separated.
Therefore
\[
 \sum_{y\in\text{one block}}\min\{X^{k-1},\|kyt+\alpha\|_\T^{-1}\}
 \le X^{k-1}+\frac2\rho\sum_{j=1}^{\lfloor L/2\rfloor}\frac1j
 \ll_k X^{k-1}\log(2X).
\]
Summing over the blocks and dividing by $X$ proves
\eqref{eq:translation-kernel-bound}.\footnote{Compared with the trivial bound $\Psi_X(t,\alpha)\le X^{k-1}$,
averaging has therefore saved one power of $X$, up to a logarithmic
factor.}

Finally, H\"older's inequality and translation invariance in the linear coefficient give
\[
 \begin{aligned}
 \int_{\mathcal U}|G(\boldsymbol\xi,\boldsymbol\gamma)|\,d\boldsymbol\xi
 &\le\prod_{i=1}^{2s}
 \left(\int_{\T^k}
 |f_{2X}(\boldsymbol\xi+\gamma_i\boldsymbol e_1)|^{2s}
 \,d\boldsymbol\xi\right)^{1/(2s)}\\
 &=J_{s,k}(2X).
 \end{aligned}
\]
The geometric-sum bound gives
\[
 |\mathcal D_X(\gamma)|\ll\min\{X,\|\gamma\|_\T^{-1}\},
 \qquad \|\mathcal D_X\|_{L^1(\T)}\ll\log(2X).
\]
Since
$\|\Phi_X\|_{L^1(\T^{2s})}
=\|\mathcal D_X\|_{L^1(\T)}^{2s}\ll_s(\log(2X))^{2s}$,
averaging the inequality~\eqref{eq:mixed-completed} over $y$ and using
\eqref{eq:averaged-kernel}--\eqref{eq:translation-kernel-bound} gives
\[
 \begin{aligned}
 I_s(X)
 &\le\int_{\T^{2s}}|\Phi_X(\boldsymbol\gamma)|
 \int_{\mathcal U}|G(\boldsymbol\xi,\boldsymbol\gamma)|
 \frac1X\sum_{y=1}^X|\mathcal K_y(\boldsymbol\xi)|
 \,d\boldsymbol\xi\,d\boldsymbol\gamma\\
 &\ll_{k,s}
 \underbrace{X^{\sum_{j=2}^{k-2}j}}_{\text{lower-degree kernels}}\,
 \underbrace{X^{k-2}\log(2X)}_{\text{translation average}}\,
 \underbrace{(\log(2X))^{2s}}_{\text{completion}}\,
 J_{s,k}(2X).
 \end{aligned}
\]
The exponent is
\[
 \sum_{j=2}^{k-2}j+k-2
 =\frac{k(k-3)}2+k-2=K_k-k-2,
\]
as required.
\end{proof}

\begin{corollary}\label{cor:critical-moment}
For every $\varepsilon>0$ and every integer $X\ge2$,
\begin{equation}\label{eq:critical-moment}
 \int_{\Min(X)}\int_{\T}|S_X(x,t)|^{2K_k}\,dx\,dt
 \ll_{k,\varepsilon}X^{2K_k-k-2+\varepsilon}.
\end{equation}
\end{corollary}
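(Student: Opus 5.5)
We specialize Proposition~\ref{prop:mixed} to $s=K_k=k(k+1)/2$ and insert the Vinogradov mean value theorem. For $k\ge5$ this value of $s$ is exactly the critical exponent $s_k$. The resolution of the main conjecture by Bourgain--Demeter--Guth~\cite{bourgain-demeter-guth} and Wooley~\cite{wooley-nested} then gives, for every $\varepsilon>0$,
\[
 J_{K_k,k}(Y)\ll_{k,\varepsilon}Y^{2K_k-K_k+\varepsilon}=Y^{K_k+\varepsilon}.
\]
This holds because at the critical point $s=s_k$ the two competing terms $Y^s$ and $Y^{2s-s_k}$ coincide.

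Apply this bound with $Y=2X$. The factor $2^{K_k+\varepsilon}$ is absorbed into the implied constant, so $J_{K_k,k}(2X)\ll_{k,\varepsilon}X^{K_k+\varepsilon}$. Substituting into \eqref{eq:mixed-general} gives
\[
 \int_{\Min(X)}\int_{\T}|S_X(x,t)|^{2K_k}\,dx\,dt
 \ll_{k,\varepsilon}X^{K_k-k-2}(\log(2X))^{2K_k+1}X^{K_k+\varepsilon}.
\]
The power of $X$ here is $2K_k-k-2+\varepsilon$. The logarithmic factor $(\log 2X)^{2K_k+1}$ is $\ll_{k,\varepsilon}X^{\varepsilon}$. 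Replacing $\varepsilon$ by $\varepsilon/2$ throughout then yields \eqref{eq:critical-moment}.

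There is no substantive obstacle. The only points to check are that $s=K_k$ is the critical exponent $k(k+1)/2$, which is where the assumption $k\ge5$ (giving $K_k=s_k$) is used, and that the bookkeeping of the $\varepsilon$-losses and of the shift from $X$ to $2X$ is uniform in $X\ge2$.
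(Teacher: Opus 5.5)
Your proposal is correct and follows the paper's proof essentially verbatim. You take $s=K_k=s_k$ in Proposition~\ref{prop:mixed}, apply the Vinogradov mean value theorem to get $J_{K_k,k}(2X)\ll_{k,\varepsilon}X^{K_k+\varepsilon/2}$, and absorb $(\log(2X))^{2K_k+1}$ into $X^{\varepsilon/2}$.
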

\begin{proof}
Since $K_k=s_k=k(k+1)/2$ for $k\ge5$, the Vinogradov mean value theorem
\cite[Theorem~1.1]{bourgain-demeter-guth}
(see also~\cite[Corollary~1.3]{wooley-nested}) gives
\begin{equation}\label{eq:vmvt}
 J_{s,k}(X)\ll_{k,s,\eta}X^{s+\eta}+X^{2s-K_k+\eta}
 \qquad(\eta>0).
\end{equation}
Taking $s=K_k$ and $\eta=\varepsilon/2$ gives
\[
 J_{K_k,k}(2X)\ll_{k,\varepsilon}X^{K_k+\varepsilon/2},
 \qquad (\log(2X))^{2K_k+1}\ll_{k,\varepsilon}X^{\varepsilon/2}.
\]
Proposition~\ref{prop:mixed} therefore yields
\begin{align*}
 \int_{\Min(X)}\int_{\T}|S_X(x,t)|^{2K_k}\,dx\,dt
 &\ll_{k,\varepsilon}
 X^{K_k-k-2}X^{\varepsilon/2}X^{K_k+\varepsilon/2}\\
 &=X^{2K_k-k-2+\varepsilon}.\qedhere
\end{align*}
\end{proof}

\section{Proof of Theorem~\ref{thm:main}}\label{sec:proof}
In the major-arc analysis, we apply Poisson summation and control the
tails of the resulting series by repeated integration by parts.
We therefore work with smooth Weyl sums in both the major- and
minor-arc estimates.
Fix $w\in C_c^\infty((1/2,3))$ with $w=1$ on $[1,2]$, and for real
$M\ge2$ set
\begin{equation}\label{eq:smooth-sum}
 S_M^w(x,t)=\sum_{n\in\Z}w(n/M)e(nx+n^kt).
\end{equation}

On the minor arcs, we first transfer the mixed-moment bound of
Corollary~\ref{cor:critical-moment} to these smooth sums by partial
summation, and then use a reproducing kernel in the time variable to
obtain the corresponding maximal estimate. For these two steps, we
introduce the following enlarged major-arc sets.
Set $C_0=4^{k-1}$ and fix $\tau>0$
independently of $M$, with $\tau$ to be specified in~\eqref{eq:parameters}.
For $\sigma\in\{0,\tau\}$, put
\begin{equation}\label{eq:arcs}
 \Maj_\sigma(M)=
 \bigcup_{1\le q\le C_0M}\ 
 \bigcup_{\substack{0\le a<q\\(a,q)=1}}
 \left\{t\in\T:
 \left\|t-\frac aq\right\|_{\T}
 \le\frac{C_0M^{1-k+\sigma}}q\right\}.
\end{equation}
Write $\Min_\sigma(M)=\T\setminus\Maj_\sigma(M)$.
The enlargement to $\Maj_0(M)$ gives a common minor-arc set for all
lengths arising in partial summation, while the further enlargement to
$\Maj_\tau(M)$ provides the separation needed for the reproducing kernel
argument.

For sufficiently large $M$ and $M/2\le u\le3M$, set
$X=\lfloor u\rfloor$. Then $M/4\le X\le3M$, so the choice of $C_0$ gives
\[
 q\le X\ \Longrightarrow\ q\le C_0M,
 \qquad
 \frac{X^{1-k}}q\le\frac{C_0M^{1-k}}q.
\]
If $t\notin\Min(X)$, there exist $1\le q\le X$ and $a\bmod q$ with
$\|t-a/q\|_{\T}\le X^{1-k}/q$.
Reducing $a/q$ to lowest terms can only decrease the denominator and
increase the permitted arc width, so $t\in\Maj_0(M)$. Hence
\begin{equation}\label{eq:arc-inclusion}
\Min_0(M)\subseteq\Min(\lfloor u\rfloor)
 \qquad(M/2\le u\le3M).
\end{equation}
Furthermore, for every fixed $\tau>0$,
\begin{equation}\label{eq:separation}
 \dist\bigl(\Min_\tau(M),\Maj_0(M)\bigr)\gg M^{-k+\tau}
\end{equation}
for sufficiently large $M$, whenever both sets are nonempty.
To see this, let $t\in\Min_\tau(M)$ and let $u$ lie in the
$\Maj_0(M)$ arc about $a/q$. The triangle inequality for $\|\cdot\|_{\T}$ gives
\[
 \|t-u\|_{\T}>
 \frac{C_0M^{1-k}(M^\tau-1)}q
 \ge M^{-k}(M^\tau-1)\gg M^{-k+\tau}.
\]
Any bounded range of $M$ excluded here or in~\eqref{eq:arc-inclusion}
is absorbed into the implicit constants.

\subsection{The minor arcs}\label{subsec:smooth-minor}
Partial summation now gives the following smooth version of
Corollary~\ref{cor:critical-moment}.
\begin{lemma}\label{lem:smooth-moment}
For every $\eta>0$ and every real $M\ge2$, we have
\begin{equation}\label{eq:smooth-moment}
 \int_{\T}\int_{\Min_0(M)}|S_M^w(x,t)|^{2K_k}\,dt\,dx
 \ll_{k,w,\eta} M^{2K_k-k-2+\eta}.
\end{equation}
\end{lemma}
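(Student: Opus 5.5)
We reduce the smooth sum to sharp partial sums by partial summation in $n$. Since $w$ is supported in $(1/2,3)$, for all $(x,t)$ we have
\[
 S_M^w(x,t)=\sum_{M/2<n<3M}w(n/M)e(nx+n^kt)
 =-\int_{M/2}^{3M}\frac{1}{M}w'(u/M)\,S_{\lfloor u\rfloor}(x,t)\,du.
\]
This is summation by parts with $S_X(x,t)=\sum_{n\le X}e(nx+n^kt)$. The boundary terms vanish because $w(1/2)=w(3)=0$. Since $\int_{M/2}^{3M}M^{-1}|w'(u/M)|\,du=\|w'\|_{L^1}\ll_w1$, Hölder's (or Jensen's) inequality for the probability-like measure $M^{-1}|w'(u/M)|\,du/\|w'\|_{L^1}$ gives
\[
 |S_M^w(x,t)|^{2K_k}\ll_w\int_{M/2}^{3M}\frac1M|w'(u/M)|\,|S_{\lfloor u\rfloor}(x,t)|^{2K_k}\,du .
\]

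Next we integrate over $x\in\T$ and $t\in\Min_0(M)$ and interchange the integrals by Tonelli. For each $u\in[M/2,3M]$, the inclusion \eqref{eq:arc-inclusion} gives $\Min_0(M)\subseteq\Min(\lfloor u\rfloor)$. The integrand is nonnegative, so we may enlarge the $t$-domain to $\Min(\lfloor u\rfloor)$. Corollary~\ref{cor:critical-moment}, applied with $X=\lfloor u\rfloor$ and $\varepsilon=\eta$, then bounds the inner integral by $\ll_{k,\eta}X^{2K_k-k-2+\eta}$. Since $M/4\le X\le 3M$, this is $\ll M^{2K_k-k-2+\eta}$, because the exponent is positive. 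Integrating against $M^{-1}|w'(u/M)|\,du$ costs only a factor $\ll_w 1$, which yields \eqref{eq:smooth-moment}.

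Only a few minor points need checking. The corollary requires $X\ge2$, which holds once $M\ge 8$; the range $2\le M<8$ is bounded and is handled by the trivial bound $|S_M^w|\ll_w M$, so it is absorbed into the constant. Likewise, \eqref{eq:arc-inclusion} was stated only for sufficiently large $M$, and the remaining bounded range of $M$ is treated by the same trivial bound. There is no substantive obstacle. The one step that deserves care is using the arc inclusion uniformly in $u$, since this is what allows the single set $\Min_0(M)$ to be dominated by every $\Min(\lfloor u\rfloor)$ at once.
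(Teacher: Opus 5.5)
Your proposal is correct and is essentially the paper's proof: partial summation against $w'(u/M)/M$, the inclusion \eqref{eq:arc-inclusion} applied uniformly for $M/2\le u\le 3M$, Corollary~\ref{cor:critical-moment} with $X=\lfloor u\rfloor$, and the trivial bound for bounded $M$. The only difference is cosmetic: you apply H\"older/Jensen to the $2K_k$-th power and then Tonelli, whereas the paper applies Minkowski's integral inequality directly to the $L^{2K_k}(\T\times\Min_0(M))$ norm, and both yield the same estimate.
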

\begin{proof}
It suffices to consider sufficiently large $M$, since bounded $M$ are
covered by $|S_M^w|\ll_w M$.
Partial summation yields
\[
 S_M^w(x,t)=-\int_0^\infty S_{\lfloor u\rfloor}(x,t)
 \frac{w'(u/M)}M\,du,
\]
where $S_0=0$ and
\[
 \int_{M/2}^{3M}\frac{|w'(u/M)|}{M}\,du
 =\int_{1/2}^3|w'(v)|\,dv\ll_w1.
\]
By Minkowski's inequality,~\eqref{eq:arc-inclusion} and
Corollary~\ref{cor:critical-moment}, we obtain
\begin{align*}
 \|S_M^w\|_{L^{2K_k}(\T\times\Min_0(M))}
 &\le\int_{M/2}^{3M}\frac{|w'(u/M)|}{M}
 \|S_{\lfloor u\rfloor}\|_{L^{2K_k}(\T\times\Min(\lfloor u\rfloor))}\,du\\
 &\ll_{k,w,\eta}M^{(2K_k-k-2+\eta)/(2K_k)}.
\end{align*}
This proves~\eqref{eq:smooth-moment}.
\end{proof}

We next use a reproducing kernel in the time variable to pass from this
mixed-moment bound to a maximal estimate on $\Min_\tau(M)$.
\begin{lemma}\label{lem:minor-maximal}
For fixed $\tau>0$, every $\eta>0$ and $1\le r\le2K_k$,
\begin{equation}\label{eq:minor-maximal}
 \left\|\sup_{t\in\Min_\tau(M)}|S_M^w(\cdot,t)|\right\|_{L^r(\T)}
 \ll_{k,w,\tau,r,\eta}M^{1-1/K_k+\eta/(2K_k)}.
\end{equation}
\end{lemma}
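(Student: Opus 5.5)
The plan is to reproduce $S_M^w(x,t_0)$ at each point $t_0\in\Min_\tau(M)$ as a smooth average of $S_M^w(x,\cdot)$ over a window of width about $M^{-k}$ around $t_0$. The part of the average over $\Min_0(M)$ is controlled by Lemma~\ref{lem:smooth-moment}. The part over $\Maj_0(M)$ is negligible by the separation~\eqref{eq:separation}. We prove the bound at $r=2K_k$; smaller $r$ then follow from monotonicity of $L^r$ norms on $\T$ with normalized measure.

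\textbf{Kernel.} Fix a Schwartz function $\phi$ on $\R$ with $\widehat\phi$ compactly supported and $\widehat\phi=1$ on $[-1,1]$. Put $R=(3M)^k$ and define the periodic kernel
\[
 \Psi_M(u)=\sum_{m\in\Z}\widehat\phi(m/R)e(mu)=\sum_{j\in\Z}R\,\phi\bigl(R(u+j)\bigr),
\]
where the second form is Poisson summation. Since $w$ is supported in $(1/2,3)$, every frequency $n^k$ of $S_M^w(x,\cdot)$ satisfies $|n^k|\le R$, so $\widehat\phi(n^k/R)=1$. Hence
\[
 S_M^w(x,t_0)=\int_\T S_M^w(x,t)\Psi_M(t_0-t)\,dt .
\]
From the rapid decay of $\phi$ we get, for every $A>0$,
\[
 |\Psi_M(u)|\ll_A R\,(1+R\|u\|_\T)^{-A},\qquad \|\Psi_M\|_{L^1(\T)}\ll1,\qquad \|\Psi_M\|_{L^\infty}\ll M^k .
\]

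\textbf{Splitting the integral.} Split the $t$-integral into $\Min_0(M)$ and $\Maj_0(M)$.

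On $\Maj_0(M)$ we have $\|t_0-t\|_\T\gg M^{-k+\tau}$ by~\eqref{eq:separation}, so $|\Psi_M(t_0-t)|\ll_A M^{k-\tau A}$. Together with $|S_M^w|\ll M$, this part is $\ll M^{1+k-\tau A}$. Choosing $A$ large in terms of $\tau$ and $k$ makes it $O(1)$, uniformly in $x$ and $t_0$.

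On $\Min_0(M)$, apply H\"older's inequality with respect to the measure $|\Psi_M(t_0-t)|\,dt$, whose mass is $O(1)$:
\[
 \Bigl|\int_{\Min_0(M)}S_M^w(x,t)\Psi_M(t_0-t)\,dt\Bigr|^{2K_k}
 \ll\int_{\Min_0(M)}|S_M^w(x,t)|^{2K_k}|\Psi_M(t_0-t)|\,dt
 \le M^k\int_{\Min_0(M)}|S_M^w(x,t)|^{2K_k}\,dt .
\]

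\textbf{Conclusion.} The last bound no longer depends on $t_0$, so taking the supremum over $t_0\in\Min_\tau(M)$ gives
\[
 \sup_{t_0\in\Min_\tau(M)}|S_M^w(x,t_0)|^{2K_k}\ll M^k\int_{\Min_0(M)}|S_M^w(x,t)|^{2K_k}\,dt+1 .
\]
Integrating in $x$ and applying Lemma~\ref{lem:smooth-moment} bounds the $2K_k$-th power of the maximal $L^{2K_k}$ norm by
\[
 M^{k}\cdot M^{2K_k-k-2+\eta}=M^{2K_k-2+\eta}.
\]
Taking $2K_k$-th roots gives $M^{1-1/K_k+\eta/(2K_k)}$, which is~\eqref{eq:minor-maximal} for $r=2K_k$.

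The only delicate point is the choice of kernel scale. The width must be about $M^{-k}$, so that it captures all frequencies up to $(3M)^k$ and costs only the factor $M^k$ that exactly cancels the saving in Lemma~\ref{lem:smooth-moment}. It must also be smaller than the separation $M^{-k+\tau}$ by a power of $M$. This power gap is exactly why the enlargement from $\Maj_0(M)$ to $\Maj_\tau(M)$ was introduced.
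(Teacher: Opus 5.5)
Your proposal is correct and follows essentially the same route as the paper. Both arguments reduce to $r=2K_k$, use the periodized Schwartz reproducing kernel at scale $(3M)^k$, and split the $t$-integral over $\Min_0(M)$ and $\Maj_0(M)$. Both use the separation~\eqref{eq:separation} with kernel decay to make the $\Maj_0(M)$ part negligible, then integrate in $x$ against Lemma~\ref{lem:smooth-moment}.

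The only cosmetic difference is on $\Min_0(M)$. You use Hölder weighted by $|\Psi_M(t_0-t)|\,dt$, combining $\|\Psi_M\|_{L^1}\ll1$ with $\|\Psi_M\|_{L^\infty}\ll M^k$. The paper instead uses plain Hölder with $\|\mathcal P_\lambda\|_{L^{2K_k/(2K_k-1)}}\ll\lambda^{1/(2K_k)}$. Both produce the same factor $M^k$.
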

\begin{proof}
Since $\T$ has normalized measure, H\"older's inequality reduces the proof
to the case $r=2K_k$.
Assume that $\Min_\tau(M)\ne\varnothing$.
We may also assume that $M$ is sufficiently large, since bounded $M$ are
covered by the trivial bound $|S_M^w|\ll_w M$.
Choose a Schwartz function $\psi$ whose Fourier transform equals one
on $[-1,1]$ and is supported in $[-2,2]$. For $\lambda\ge2$, define the
periodic reproducing kernel
\begin{align*}
 \mathcal P_\lambda(t)&=\sum_{j\in\Z}\lambda\psi(\lambda(t+j)),\\
 \widehat{\mathcal P}_\lambda(m)&=\widehat\psi(m/\lambda)=1
 \qquad(m\in\Z,\ |m|\le\lambda).
\end{align*}
Schwartz decay gives
\begin{align}
 \|\mathcal P_\lambda\|_{L^v(\T)}&\ll_v \lambda^{1-1/v}
 \qquad(1\le v\le\infty),\label{eq:kernel-norm}\\
 \int_{\|t\|_{\T}\ge d}|\mathcal P_\lambda(t)|\,dt
 &\ll_A(\lambda d)^{1-A}
 \qquad(A>1,\ \lambda^{-1}\le d\le1/2).\label{eq:kernel-tail}
\end{align}
Take $\lambda=(3M)^k$. Since
\[
 w(n/M)\ne0\ \Longrightarrow\ 0<n<3M
 \ \Longrightarrow\ \widehat{\mathcal P}_\lambda(n^k)=1,
\]
for $t_0\in\Min_\tau(M)$ we have
\begin{align*}
 S_M^w(x,t_0)
 &=\int_{\T}S_M^w(x,t)\mathcal P_\lambda(t_0-t)\,dt\\
 &=\int_{\Min_0(M)}S_M^w(x,t)\mathcal P_\lambda(t_0-t)\,dt
   +\int_{\Maj_0(M)}S_M^w(x,t)\mathcal P_\lambda(t_0-t)\,dt.
\end{align*}
If $\Maj_0(M)\ne\varnothing$, put
$d=\dist(\Min_\tau(M),\Maj_0(M))$. By~\eqref{eq:separation},
\[
 \lambda^{-1}\le d\le\tfrac12,\qquad \lambda d\gg M^\tau
\]
for sufficiently large $M$. Thus~\eqref{eq:kernel-tail} gives
\begin{align*}
 \left|\int_{\Maj_0(M)}S_M^w(x,t)\mathcal P_\lambda(t_0-t)\,dt\right|
 &\ll_w M\int_{\|u\|_{\T}\ge d}|\mathcal P_\lambda(u)|\,du\\
 &\ll_{w,A}M(\lambda d)^{1-A}
 \ll_{k,w,\tau,A}M^{1-\tau(A-1)}
 \ll_{k,w,\tau}M^{-100},
\end{align*}
where $A>1+101/\tau$ is fixed. If $\Maj_0(M)=\varnothing$, the
integral is zero.

Put
\[
 \widetilde S(x)=\int_{\Min_0(M)}|S_M^w(x,t)|^{2K_k}\,dt.
\]
H\"older's inequality and~\eqref{eq:kernel-norm} give
\begin{align*}
 \left|\int_{\Min_0(M)}S_M^w(x,t)\mathcal P_\lambda(t_0-t)\,dt\right|
 &\le \widetilde S(x)^{1/(2K_k)}\|\mathcal P_\lambda\|_{L^{2K_k/(2K_k-1)}(\T)}\\
 &\ll_k M^{k/(2K_k)}\widetilde S(x)^{1/(2K_k)}.
\end{align*}
Consequently, uniformly in $t_0\in\Min_\tau(M)$,
\begin{equation}\label{eq:pointwise-minor}
 |S_M^w(x,t_0)|
 \ll_{k,w,\tau} M^{k/(2K_k)}\widetilde S(x)^{1/(2K_k)}+M^{-100}.
\end{equation}
Taking the supremum in~\eqref{eq:pointwise-minor}, then the spatial
$L^{2K_k}(\T)$ norm, and finally using~\eqref{eq:smooth-moment}, we obtain
\begin{align*}
 &\left\|\sup_{t\in\Min_\tau(M)}|S_M^w(\cdot,t)|\right\|_{L^{2K_k}(\T)}\\
 &\quad\ll_{k,w,\tau}
 M^{k/(2K_k)}
 \left(\int_{\T}\int_{\Min_0(M)}
 |S_M^w(x,t)|^{2K_k}\,dt\,dx\right)^{1/(2K_k)}
 +M^{-100}\\
 &\quad\ll_{k,w,\tau,\eta}
 M^{k/(2K_k)}M^{(2K_k-k-2+\eta)/(2K_k)}+M^{-100}\\
 &\quad\ll M^{1-1/K_k+\eta/(2K_k)}.\qedhere
\end{align*}
\end{proof}

\subsection{The major arcs}
For integers $q\ge1$ and $a,b$ with $(a,q)=1$, define
\begin{equation}\label{eq:complete-sum}
 c_q(a,b)=\frac1q\sum_{u\bmod q}
 e\!\left(\frac{au^k+bu}{q}\right).
\end{equation}
The classical complete-sum estimate
\cite[Lemma~2.4]{baker-chen-shparlinski} gives
\begin{equation}\label{eq:complete-bound}
 |c_q(a,b)|\ll_k q^{-1/k}.
\end{equation}

Put
\begin{equation}\label{eq:integral}
 I_M(y,\beta)=\int_{\R}w(z/M)e(yz+\beta z^k)\,dz.
\end{equation}
For $x\in\R$, put $F(z)=w(z/M)e(xz+\beta z^k)$. Then
\[
 \widehat F(\xi)=\int_{\R}F(z)e(-z\xi)\,dz=I_M(x-\xi,\beta).
\]
Writing $n=u+qj$ and applying Poisson summation in $j$ gives
\begin{align}
 S_M^w(x,a/q+\beta)
 &=\sum_{u\bmod q}e(au^k/q)\sum_{j\in\Z}F(u+qj)\notag\\
 &=\sum_{m\in\Z}\left[\frac1q\sum_{u\bmod q}
 e\!\left(\frac{au^k+mu}{q}\right)\right]\widehat F(m/q)\notag\\
 &=\sum_{m\in\Z}c_q(a,m)I_M(x-m/q,\beta).\label{eq:poisson}
\end{align}
The right-hand side of~\eqref{eq:poisson} is $1$-periodic in $x$,
so the formula is well defined on $\T$.

For $I_M$, we will need the following two estimates later.
\begin{lemma}\label{lem:integral-estimates}
\textup{(i)} For every $r>2$,
\begin{equation}\label{eq:integral-moment}
 \int_{\R}\sup_{\beta\in\R}|I_M(y,\beta)|^r\,dy
 \ll_{k,w,r}M^{r-1}.
\end{equation}

\noindent\textup{(ii)} Fix $\tau>0$ and choose $C_1>2kC_0 3^{k-1}$.
Then, for every $A>0$,
\begin{equation}\label{eq:poisson-tail}
 \sum_{\substack{m\in\Z\\|x-m/q|\ge C_1M^\tau/q}}
 |I_M(x-m/q,\beta)|\ll_{A,k,w,\tau}M^{-A},
\end{equation}
uniformly for $1\le q\le C_0M$, $|\beta|\le C_0M^{1-k+\tau}/q$
and $x\in\R$. The same estimate holds with each summand multiplied by
$|c_q(a,m)|$, uniformly in $(a,q)=1$.
\end{lemma}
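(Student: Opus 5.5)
Part (i) rests on two bounds for the oscillatory integral $I_M(y,\beta)$ that hold uniformly in $\beta$. Substitute $z=Mv$, so that $I_M(y,\beta)=M\int w(v)e(Myv+\beta M^kv^k)\,dv$. This gives the trivial bound $|I_M|\ll_w M$. The decay bound comes from van der Corput's lemma in the form of a second-derivative test that ignores $\beta$. Write $\phi(v)=Myv+\beta M^kv^k$. On the support $[1/2,3]$ of $w$, the derivative $\phi'(v)=My+k\beta M^kv^{k-1}$ is monotone, since $\phi''$ has a fixed sign there. So the first-derivative van der Corput estimate applies after splitting the support into boundedly many pieces. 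It controls $|I_M|$ by $M/\min|\phi'|$, but $\min|\phi'|$ can vanish at a stationary point. We therefore use the uniform bound instead: the second derivative $|\phi''|=k(k-1)|\beta|M^kv^{k-2}$ need not be large, so we split into cases.

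\emph{Case 1: $|y|\,M\ge 2k\cdot 3^{k-1}|\beta|M^k$.} Then $|\phi'|\gg M|y|$ on the support, and repeated integration by parts (or a single one) gives $|I_M|\ll M(1+M|y|)^{-1}$.

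\emph{Case 2: $|\beta|M^k\gg M|y|$.} Then $|\phi''|\gg|\beta|M^k\gg M|y|$ on $[1/2,3]$, since $v$ is bounded below. The second-derivative test gives $|I_M|\ll M(M|y|)^{-1/2}$.

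Combining the cases yields
\[
\sup_\beta|I_M(y,\beta)|\ll_{k,w}\min\{M,\,M^{1/2}|y|^{-1/2}\}.
\]
Integrating the $r$-th power, the region $|y|\le M^{-1}$ contributes $M^{r-1}$. The region $|y|>M^{-1}$ contributes $M^{r/2}\int_{|y|>1/M}|y|^{-r/2}\,dy\ll M^{r/2}M^{r/2-1}=M^{r-1}$, and this integral converges exactly because $r>2$. That proves \eqref{eq:integral-moment}.

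For part (ii), put $y=x-m/q$. With $|\beta|\le C_0M^{1-k+\tau}/q$ and $|y|\ge C_1M^\tau/q$, we get for $v\in[1/2,3]$
\[
|k\beta M^{k-1}v^{k-1}|\le kC_0 3^{k-1}M^\tau/q\le \tfrac12|y|,
\]
by the choice of $C_1$. Hence $|\phi'(v)|\ge \tfrac12 M|y|$. We integrate by parts $B$ times in $v$, writing $e(\phi)=(2\pi i\phi')^{-1}\frac{d}{dv}e(\phi)$. Each step gains a factor $(M|y|)^{-1}$, provided the derivatives of $1/\phi'$ are controlled. Here $\phi^{(j)}$ for $j\ge2$ is of size $\ll|\beta|M^k\ll M^{1+\tau}/q\ll M|y|$, so each derivative of $1/\phi'$ costs at most a factor $(M|y|)^{-1}$. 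This yields $|I_M(y,\beta)|\ll_{B} M(M|y|)^{-B}$.

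Summing over $m$, the points $m/q$ have spacing $1/q$ and $|y|\ge C_1M^\tau/q$. The sum is therefore
\[
\ll M\sum_{\ell\ge C_1M^\tau}(M\ell/q)^{-B}\ll M\,(M/q)^{-B}M^{\tau(1-B)}.
\]
Since $q\le C_0M$, we have $M/q\gg1$, so the sum is $\ll M^{1-\tau(B-1)}$, and taking $B$ large gives $M^{-A}$. The version weighted by $|c_q(a,m)|$ follows from the trivial bound $|c_q|\le1$.

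The main obstacle is the uniformity in $\beta$ in part (i), which is handled by the case split on whether the linear or the degree-$k$ term dominates $\phi'$. In part (ii), the one delicate point is checking that the higher derivatives of the phase are dominated by $M|y|$, which is what lets the integration by parts go through.
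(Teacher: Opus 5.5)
Your proposal is correct and follows essentially the same route as the paper. For (i) you prove $\sup_\beta|I_M(y,\beta)|\ll M(1+M|y|)^{-1/2}$ by the same case split at $c_k=(2k3^{k-1})^{-1}$ into first- and second-derivative regimes; for (ii) you use non-stationary phase with $|\phi'|\ge\frac12M|y|$ and $|\phi^{(j)}|\ll M|y|$, followed by a lattice-sum comparison that gives $M^{1-\tau(B-1)}$. The one cosmetic imprecision is indexing each tail by integers $\ell\ge C_1M^\tau$, since $|qx-m|$ actually runs over $\ell_0+j$ ($j=0,1,2,\ldots$) with $\ell_0\ge C_1M^\tau$ not necessarily an integer. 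This changes nothing, because $\sum_{j\ge0}(L+j)^{-B}\ll L^{1-B}$ for $L\ge1$.
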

\begin{proof}
After the change of variables $z=Mu$, write
\[
 I_M(y,\beta)=M\int_{\R}w(u)e(\Phi(u))\,du,
 \qquad \Phi(u)=Myu+\beta M^ku^k.
\]
We first establish the uniform bound
\begin{equation}\label{eq:integral-decay}
 \sup_{\beta\in\R}|I_M(y,\beta)|
 \ll_{k,w}M(1+M|y|)^{-1/2}.
\end{equation}
For $M|y|\le1$, this follows from $|I_M(y,\beta)|\ll_w M$.
For $M|y|>1$, put $c_k=(2k3^{k-1})^{-1}$. On $[1/2,3]$,
\[
 \begin{cases}
 |\Phi'(u)|\ge\tfrac12M|y|,
   &\text{if }|\beta|M^k\le c_kM|y|,\\
 |\Phi''(u)|\gg_k M|y|,
   &\text{if }|\beta|M^k>c_kM|y|.
 \end{cases}
\]
Since $\Phi''(u)=k(k-1)\beta M^ku^{k-2}$ has constant sign on $[1/2,3]$
(and vanishes when $\beta=0$), $\Phi'$ is monotone there.
The first and second derivative estimates therefore give~\eqref{eq:integral-decay}.
Consequently, for $r>2$,
\[
 \int_{\R}\sup_{\beta\in\R}|I_M(y,\beta)|^r\,dy
 \ll_{k,w,r}M^r\int_{\R}(1+M|y|)^{-r/2}\,dy
 \ll_{k,w,r}M^{r-1},
\]
which proves~\eqref{eq:integral-moment}.

To prove~\eqref{eq:poisson-tail}, put $y=x-m/q$ and $\lambda=M|y|$.
In the stated range of $\beta$, the condition $|y|\ge C_1M^\tau/q$
and the choice of $C_1$ give, on $[1/2,3]$,
\[
 |\Phi'(u)|\ge\tfrac12\lambda,
 \qquad |\Phi^{(j)}(u)|\ll_{k,j}\lambda\quad(j\ge2).
\]
Repeated integration by parts, using $w\in C_c^\infty((1/2,3))$,
therefore yields, for every integer $B\ge1$,
\[
 |I_M(y,\beta)|\ll_{B,k,w}M(1+M|y|)^{-B}.
\]
For fixed $x$, as $m$ ranges over $\Z$, the points $M(x-m/q)$ form
a translated lattice with spacing $M/q$.
Put $\Lambda=C_1M^{1+\tau}/q$. On each of the two tails, we bound the
first lattice term separately and compare the remaining terms with an
integral, using the spacing $M/q$. Since $u\mapsto(1+u)^{-B}$ is
decreasing for $u\ge0$, summing the preceding bound gives, for every integer $B>1$,
\begin{align*}
 &\sum_{\substack{m\in\Z\\|x-m/q|\ge C_1M^\tau/q}}
 |I_M(x-m/q,\beta)|\\
 &\quad\ll_{B,k,w} M\left((1+\Lambda)^{-B}
       +\frac qM\int_\Lambda^\infty(1+u)^{-B}\,du\right)\\
 &\quad\ll_{B,k,w} M\left((1+\Lambda)^{-B}
       +\frac qM(1+\Lambda)^{1-B}\right)\\
 &\quad\ll_{B,k,w}M^{1-\tau(B-1)},
\end{align*}
where the last step uses $q\le C_0M$ and
$\Lambda\ge(C_1/C_0)M^\tau$.
Choosing an integer $B>1+(A+1)/\tau$ proves~\eqref{eq:poisson-tail}.
The weighted estimate follows from $|c_q(a,m)|\le1$.
\end{proof}

\begin{proposition}\label{prop:major-maximal}
For fixed $\tau>0$ and every real $r>2k$,
\begin{equation}\label{eq:major-maximal}
 \left\|\sup_{t\in\Maj_\tau(M)}|S_M^w(\cdot,t)|\right\|_{L^r(\T)}
 \ll_{k,w,r,\tau}M^{1-1/r+(1-1/r)\tau}.
\end{equation}
\end{proposition}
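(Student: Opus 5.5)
The plan is to decompose $\Maj_\tau(M)$ by denominator, bound each piece with the Poisson formula \eqref{eq:poisson} and Lemma~\ref{lem:integral-estimates}, and then sum over $q$ and $a$.

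For $t=a/q+\beta$ in the arc about $a/q$ we have $|\beta|\le C_0M^{1-k+\tau}/q$ and $q\le C_0M$, so \eqref{eq:poisson} applies. By \eqref{eq:poisson-tail}, the terms with $|x-m/q|\ge C_1M^\tau/q$ contribute $O(M^{-A})$ uniformly. The remaining terms satisfy $|x-m/q|<C_1M^\tau/q$. For $x\in[0,1)$, there are $\ll M^\tau$ such $m$ in total, and each obeys $|c_q(a,m)|\ll q^{-1/k}$ by \eqref{eq:complete-bound}.

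We therefore get the pointwise bound
\[
\sup_{t\in\Maj_\tau(M)}|S_M^w(x,t)|\ll \sup_{q\le C_0M}q^{-1/k}\sum_{|x-m/q|<C_1M^\tau/q}\sup_\beta|I_M(x-m/q,\beta)|+M^{-A}.
\]
The supremum over $a$ has disappeared, since neither the bound on $c_q$ nor $I_M$ depends on $a$. We bound the supremum over $q$ by the $\ell^r$ sum over $q$, which is affordable because $r>2k$. For fixed $q$, Hölder's inequality over the at most $\ll M^\tau$ admissible $m$ gives
\[
\Bigl(\sum_m|\cdot|\Bigr)^r\ll M^{\tau(r-1)}\sum_m\sup_\beta|I_M(x-m/q,\beta)|^r .
\]
Integrating over $x\in[0,1)$ and unfolding the $q$ translates $m/q$ per unit interval, this gives
\[
\int_\T\sum_m\sup_\beta|I_M|^r\,dx\le q\int_\R\sup_\beta|I_M(y,\beta)|^r\,dy\ll qM^{r-1}
\]
by \eqref{eq:integral-moment}, which needs $r>2$.

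Summing over $q$ then yields
\[
\Bigl\|\sup_{\Maj_\tau}|S_M^w|\Bigr\|_r^r\ll M^{\tau(r-1)}M^{r-1}\sum_{q\le C_0M}q^{1-r/k}.
\]
The sum converges exactly when $r/k-1>1$, that is $r>2k$. Taking $r$-th roots gives $M^{1-1/r+\tau(1-1/r)}$, which is \eqref{eq:major-maximal}.

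The main obstacle is the $\sup_q$ step. The crude $\ell^r$ replacement is only just summable at $r>2k$, which is why the range is restricted. The unfolding also needs care: integrating $x$ over a unit interval while $m$ ranges over $\Z$ turns the sum over $m$ into $q$ copies of the integral over $\R$, and the factor $q$ is what forces $q^{1-r/k}$. Getting this count right, including the $M^\tau$ truncation width in the Hölder step, is the point to check.
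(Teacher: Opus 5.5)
Your proposal is correct and follows essentially the paper's argument: truncate the Poisson expansion with the tail estimate, apply H\"older over the $\ll M^\tau$ surviving frequencies, replace the supremum over arcs by a sum over $q$, unfold the $m$-sum into $q$ copies of $\int_\R\sup_\beta|I_M|^r$, and sum $q^{1-r/k}$ for $r>2k$. The only cosmetic difference is that the paper keeps $\sum_{b \bmod q}\max_a|c_q(a,b)|^r$ and invokes $|c_q|\ll q^{-1/k}$ only at the end. This is what lets the appendix replace that step by the fourth-moment bound for $E(q)$.
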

\begin{proof}
We first assume only that $r>2$ and fix $A>1$.
For $1\le q\le C_0M$, put
\[
 \mathcal R_q(x)=\{m\in\Z:|qx-m|<C_1M^\tau\},
 \qquad \#\mathcal R_q(x)\ll M^\tau.
\]
By~\eqref{eq:poisson} and~\eqref{eq:poisson-tail},
\[
 S_M^w(x,a/q+\beta)
 =\sum_{m\in\mathcal R_q(x)}c_q(a,m)I_M(x-m/q,\beta)
 +O_{k,w,\tau,r,A}(M^{-A/r}),
\]
uniformly for $x\in\R$, $(a,q)=1$ and
$|\beta|\le C_0M^{1-k+\tau}/q$.
H\"older's inequality, followed by taking the supremum in $a$ and
$\beta$ and extending the sum to all $m\in\Z$, gives
\[
\begin{aligned}
 &\sup_{\substack{a\bmod q,\ (a,q)=1\\
                  |\beta|\le C_0M^{1-k+\tau}/q}}
 |S_M^w(x,a/q+\beta)|^r\\
 &\quad\ll_{k,w,\tau,r,A}
 M^{(r-1)\tau}\sum_{m\in\Z}
 \left(\max_{\substack{a\bmod q\\(a,q)=1}}|c_q(a,m)|^r\right)
 \sup_{\beta\in\R}|I_M(x-m/q,\beta)|^r+M^{-A}.
\end{aligned}
\]
Writing $m=b+jq$ and using $c_q(a,b+jq)=c_q(a,b)$, we obtain
\begin{align}
 &\int_{\T}
 \sup_{\substack{a\bmod q,\ (a,q)=1\\
                  |\beta|\le C_0M^{1-k+\tau}/q}}
 |S_M^w(x,a/q+\beta)|^r\,dx\notag\\
 &\quad\ll
 M^{(r-1)\tau}
 \left(\sum_{b\bmod q}\max_{\substack{a\bmod q\\(a,q)=1}}
 |c_q(a,b)|^r\right)
 \int_{\R}\sup_{\beta\in\R}|I_M(y,\beta)|^r\,dy
 +O_A(M^{-A})\notag\\
 &\quad\ll
 M^{r-1+(r-1)\tau}
 \sum_{b\bmod q}\max_{\substack{a\bmod q\\(a,q)=1}}|c_q(a,b)|^r
 +O_A(M^{-A}),\label{eq:fixed-denominator}
\end{align}
where the last step uses~\eqref{eq:integral-moment}.

Now let $r>2k$. The complete-sum estimate~\eqref{eq:complete-bound} gives
\[
 \sum_{b\bmod q}\max_{\substack{a\bmod q\\(a,q)=1}}|c_q(a,b)|^r
 \ll_{k,r}q^{1-r/k}.
\]
By the definition of $\Maj_\tau(M)$, summing~\eqref{eq:fixed-denominator}
over $q\le C_0M$ therefore yields
\begin{align*}
 \int_{\T}\sup_{t\in\Maj_\tau(M)}|S_M^w(x,t)|^r\,dx
 &\ll M^{r-1+(r-1)\tau}\sum_{q\le C_0M}q^{1-r/k}
       +O_A(M^{1-A})\\
 &\ll_{k,w,\tau,r,A}M^{r-1+(r-1)\tau},
\end{align*}
since $\sum_{q\ge1}q^{1-r/k}<\infty$ for $r>2k$.
Taking $A=2$ and the $r$-th root proves~\eqref{eq:major-maximal}.
\end{proof}

\subsection{Completion of the proof}
The cases $k=3,4$ follow from our earlier
work~\cite[Theorem~1.1]{chen-miao-yuan-zhao}, so we assume $k\ge5$.
Then $K_k=k(k+1)/2>2k$. Fix $\varepsilon>0$ and set
\begin{equation}\label{eq:parameters}
 \tau=\eta=\min\{\varepsilon/8,1/16\}.
\end{equation}
Combining Lemma~\ref{lem:minor-maximal} and
Proposition~\ref{prop:major-maximal} with $r=K_k$ gives
\begin{align*}
 \left\|\sup_{t\in\T}|S_M^w(\cdot,t)|\right\|_{L^{K_k}(\T)}
 &\ll M^{1-1/K_k+\eta/(2K_k)}
       +M^{1-1/K_k+(1-1/K_k)\eta}\\
 &\ll M^{1-1/K_k+\eta}.
\end{align*}

We recover the original sums by Fourier completion.
For $M\le B\le2M$, define
\[
 \mathcal D_{M,B}(\xi)=\sum_{M<j\le B}e(-j\xi).
\]
Since $w=1$ on $[1,2]$, orthogonality gives
\[
 \sum_{M<n\le B}e(nx+n^kt)
 =\int_{\T}\mathcal D_{M,B}(\xi)S_M^w(x+\xi,t)\,d\xi.
\]
Geometric summation gives, uniformly in $B$,
\[
 |\mathcal D_{M,B}(\xi)|\ll\min\{M,\|\xi\|_{\T}^{-1}\},
 \qquad \|\mathcal D_{M,B}\|_{L^1(\T)}\ll\log(2M).
\]
Thus Minkowski's inequality and translation invariance yield
\begin{align*}
 \left\|\sup_{t\in\T}
 \left|\sum_{M<n\le B}e(nx+n^kt)\right|\right\|_{L^{K_k}(\T)}
 &\ll M^{1-1/K_k+\eta}\log(2M)\\
 &\ll_{k,w,\eta}M^{1-1/K_k+2\eta}.
\end{align*}
A dyadic decomposition, allowing a truncated final interval, now gives
\begin{align}
 \|\W_N\|_{L^{K_k}(\T)}
 &\ll_{k,\varepsilon}
 1+\sum_{\substack{j\ge1\\2^j\le N}}(2^j)^{1-1/K_k+2\eta}\notag\\
 &\ll N^{1-1/K_k+2\eta}
 \le N^{1-1/K_k+\varepsilon}.\label{eq:critical-maximal}
\end{align}
For $1\le p\le K_k$, this implies
\[
 \|\W_N\|_{L^p(\T)}\le\|\W_N\|_{L^{K_k}(\T)}
 \ll_{k,\varepsilon}N^{1-1/K_k+\varepsilon}.
\]
For $p>K_k$, the bound $\W_N\le N$ gives
\[
 \|\W_N\|_{L^p(\T)}
 \le N^{1-K_k/p}\|\W_N\|_{L^{K_k}(\T)}^{K_k/p}
 \ll_{k,\varepsilon}N^{1-1/p+(K_k/p)\varepsilon}
 \le N^{1-1/p+\varepsilon}.
\]
This proves Theorem~\ref{thm:main}.

\begin{remark}
The moment order $2K_k$ minimizes the exponent obtained by this argument.
Indeed, for any integer $s\ge1$, Proposition~\ref{prop:mixed},
\eqref{eq:vmvt} and partial summation give
\[
 \int_{\T}\int_{\Min_0(M)}|S_M^w(x,t)|^{2s}\,dt\,dx
 \ll_{k,w,s,\varepsilon}
 M^{K_k-k-2+\varepsilon}(M^s+M^{2s-K_k}).
\]
Applying the reproducing kernel argument of Lemma~\ref{lem:minor-maximal}
at moment order $2s$ yields, for every fixed $\tau>0$ and $1\le r\le2s$,
\[
 \left\|\sup_{t\in\Min_\tau(M)}|S_M^w(\cdot,t)|\right\|_{L^r(\T)}
 \ll_{k,w,s,r,\tau,\varepsilon}M^{\alpha(s)+\varepsilon},
 \qquad
 \alpha(s)=\max\left\{\frac12+\frac{K_k-2}{2s},\,1-\frac1s\right\}.
\]
The maximum is attained by the decreasing first branch for $s\le K_k$
and by the increasing second branch for $s\ge K_k$. Thus $\alpha(s)$ is
minimized at $s=K_k$, with value $1-1/K_k$. This calculation gives no sharpness statement for $p<K_k$.
The spatial exponent $r=K_k$ lies in both admissible ranges
$r\le2K_k$ for the minor arcs and $r>2k$ for the major arcs.
\end{remark}

\section{Extensions to general polynomial phases}\label{sec:extensions}
In this section, we prove Theorem~\ref{thm:group-maximal} using a product
of one-dimensional Dirichlet kernels and the Vinogradov mean value theorem.

Fix $\boldsymbol\alpha_\Gamma$ and put
$\mathcal G_{\boldsymbol\alpha_\Gamma}(\boldsymbol\alpha_\Delta)=|F_N(\boldsymbol\alpha)|^{2s_k}$.
This is a nonnegative trigonometric polynomial with coordinate frequencies
\[
 |h_j|\le s_kN^j=:L_j\qquad(j\in \Delta).
\]
Define the product Dirichlet kernel
\[
 \mathcal D_\Delta(\boldsymbol v)
 =\prod_{j\in \Delta}\left(\sum_{|h|\le L_j}e(hv_j)\right).
\]
Fourier orthogonality and the frequency support give
\[
 \mathcal G_{\boldsymbol\alpha_\Gamma}(\boldsymbol\alpha_\Delta)
 =\int_{\T^\Delta}\mathcal G_{\boldsymbol\alpha_\Gamma}(\boldsymbol v)
 \mathcal D_\Delta(\boldsymbol\alpha_\Delta-\boldsymbol v)\,d\boldsymbol v,
 \qquad
 \|\mathcal D_\Delta\|_{L^\infty(\T^\Delta)}
 \le\prod_{j\in \Delta}(2L_j+1)\ll_k N^{\sigma_\Delta}.
\]
Taking the supremum in $\boldsymbol\alpha_\Delta$ and using $\mathcal G_{\boldsymbol\alpha_\Gamma}\ge0$ gives
\begin{equation}\label{eq:group-kernel}
 V_{\Gamma,N}(\boldsymbol\alpha_\Gamma)^{2s_k}
 \ll_k N^{\sigma_\Delta}
 \int_{\T^\Delta}\mathcal G_{\boldsymbol\alpha_\Gamma}(\boldsymbol\alpha_\Delta)
 \,d\boldsymbol\alpha_\Delta.
\end{equation}

Integrating in $\boldsymbol\alpha_\Gamma$ and applying the Vinogradov mean value
theorem~\cite{bourgain-demeter-guth,wooley-cubic,wooley-nested}, we obtain, for every
$\delta>0$,
\begin{equation}\label{eq:group-critical-moment}
 \begin{aligned}
 \mathsf M_{\Gamma,2s_k}(N)^{2s_k}
 &\ll_k N^{\sigma_\Delta}
 \int_{\T^k}|F_N(\boldsymbol\alpha)|^{2s_k}\,d\boldsymbol\alpha\\
 &\ll_{k,\delta}N^{\sigma_\Delta+s_k+\delta}
 =N^{2s_k-\tau_\Gamma+\delta}.
 \end{aligned}
\end{equation}
For $1\le p\le2s_k$, normalized measure gives
\[
 \mathsf M_{\Gamma,p}(N)\le\mathsf M_{\Gamma,2s_k}(N)
 \ll_{k,\delta}N^{1-\tau_\Gamma/(2s_k)+\delta/(2s_k)}.
\]
For $p>2s_k$, use $V_{\Gamma,N}\le N$ to obtain
\[
 \mathsf M_{\Gamma,p}(N)^p
 \le N^{p-2s_k}\mathsf M_{\Gamma,2s_k}(N)^{2s_k}
 \ll_{k,\delta}N^{p-\tau_\Gamma+\delta}.
\]
Taking $\delta=\varepsilon$ proves Theorem~\ref{thm:group-maximal}.

\begin{remark}\label{rem:group-scope}
Since $\W_N(x)\le V_{\{1\},N}(x)$, Theorem~\ref{thm:group-maximal}
with $\Gamma=\{1\}$ gives a bound for $\W_N$ with constant exponent
$1-1/(2s_k)$. For $k\ge5$,
Theorem~\ref{thm:main} improves this to $1-1/s_k$ through the mixed
minor-arc estimate with the intermediate coefficients fixed at zero.
\end{remark}

\appendix
\section{Fourth moments of complete sums}\label{app:fourth}
In this appendix, we use classical complete-sum estimates to obtain a
stronger $L^4(\T)$ bound on the major arcs. Let $k\ge4$ and retain the
definitions of $S_M^w$, $\Maj_\tau(M)$ and $c_q$ in this range. Put
\begin{equation}\label{eq:fourth-energy}
 E(q)=\sum_{b\bmod q}
 \max_{\substack{a\bmod q\\(a,q)=1}}|c_q(a,b)|^4,
 \qquad E(1)=1.
\end{equation}

To estimate $E(q)$, we use the following bound of Cochrane and Zheng
for binomial exponential sums modulo prime powers.

\begin{theorem}[Cochrane and Zheng {\cite[Theorem~1.1]{cochrane-zheng}}]\label{thm:cochrane-zheng}
Let $p$ be a prime, let $n,m\ge2$ be integers, and let $a,b\in\Z$. Define
\[
 S(au^n+bu,p^m)
 =\sum_{u\bmod p^m}e\!\left(\frac{au^n+bu}{p^m}\right).
\]
Write $h=\operatorname{ord}_p(n-1)$, $\kappa=\operatorname{ord}_p(n)$ and
$\varrho=\operatorname{ord}_p(a)$, where $\operatorname{ord}_p$ denotes the
$p$-adic valuation. If $\varrho\le m-2$, then for $p>2$,
\[
 |S(au^n+bu,p^m)|
 \le \gcd(n-1,p-1)\,
 p^{\frac12\min\{1,\kappa\}}
 p^{\frac12\min\{h,m-2-\varrho\}}
 p^{m/2}\gcd(b,p^m)^{1/2},
\]
whereas for $p=2$,
\[
 |S(au^n+bu,p^m)|
 \le 2p^{\frac12\min\{h,m-2-\varrho\}}
 p^{m/2}\gcd(b,p^m)^{1/2}.
\]
\end{theorem}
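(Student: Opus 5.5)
This is a result quoted from \cite[Theorem~1.1]{cochrane-zheng}, and in the paper it is simply invoked. If I were to reprove it, I would use the standard $p$-adic stationary phase method for complete sums modulo prime powers.

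Put $f(u)=au^n+bu$ and set $t=\lfloor m/2\rfloor$. Each $u\bmod p^m$ is written uniquely as $u=x+p^{m-t}y$ with $x\bmod p^{m-t}$ and $y\bmod p^{t}$. Since $2(m-t)\ge m$, Taylor expansion gives
\[
f(x+p^{m-t}y)\equiv f(x)+p^{m-t}yf'(x)\pmod{p^m}.
\]
For $p=2$ the quadratic term needs a little care, which is the source of the extra factor $2$. Summing over $y$ by orthogonality gives
\[
S(au^n+bu,p^m)=p^{t}\sum_{\substack{x\bmod p^{m-t}\\ f'(x)\equiv0\ (p^{t})}}e\!\left(\frac{f(x)}{p^m}\right).
\]
When $m$ is even this is bounded by $p^{m/2}$ times the number of critical points, that is, the solutions of $nax^{n-1}+b\equiv0\pmod{p^{t}}$ with $x\bmod p^{t}$. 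When $m$ is odd, one extra coordinate of $x$ survives. I would expand to second order around each critical point; the resulting quadratic Gauss sum modulo $p$ has size $\sqrt p$, which again yields $p^{m/2}$ per critical class.

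The core of the proof is counting the critical points. Write $na=p^{\kappa+\varrho}a'$ with $p\nmid a'$. If $p\nmid x$, the congruence becomes $x^{n-1}\equiv c\pmod{p^{t-\kappa-\varrho}}$ with $c$ a unit, and this is only solvable if $p^{\kappa+\varrho}\mid b$. For $p>2$ the unit group $(\Z/p^j)^\times$ is cyclic, of the form $C_{p-1}\times C_{p^{j-1}}$. So the number of $(n-1)$-th roots of a unit is at most $\gcd(n-1,p-1)\,p^{\min\{h,j-1\}}$. Lifting these from modulus $p^{t-\kappa-\varrho}$ to $x\bmod p^{t}$ multiplies the count by $p^{\kappa+\varrho}$. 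Part of the $p^{\varrho}$ is then recovered through $\gcd(b,p^m)^{1/2}$, using the relation $p^{\kappa+\varrho}\mid b$. The $\min$ with $m-2-\varrho$ reflects the fact that the $p$-part of the root count saturates at the available modulus. After taking square roots via the Gauss-sum normalisation, the $p^{\kappa}$ contributes the factor $p^{\frac12\min\{1,\kappa\}}$. For $p=2$, the group $(\Z/2^j)^\times\cong C_2\times C_{2^{j-2}}$ gives the constant $2$ in place of $\gcd(n-1,p-1)$.

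If $p\mid x$, then $f'(x)\equiv b\pmod p$ (using $n\ge2$), so critical points of this kind exist only when $p\mid b$. In that case I would substitute $u=pv$ on the residue class $p\mid u$. This turns the contribution into a sum of the same shape modulo a smaller prime power, with $b$ replaced by $b/p$, and I would conclude by induction on $m$. Each step costs at most a factor $p$ in the count, which is what produces the factor $\gcd(b,p^m)^{1/2}$.

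The main obstacle is the bookkeeping. Keeping exponents sharp requires tracking simultaneously the valuations $\kappa$, $\varrho$, $h$ and $\operatorname{ord}_p b$, the parity of $m$, and the non-unit critical points. The case $p=2$, where the Taylor step fails at second order, also needs separate treatment. I would first carry out the argument for $p>2$ with $p\nmid nb$, where the bound reduces to $\gcd(n-1,p-1)p^{m/2}$. I would then add the $p\mid n$ and $p\mid b$ refinements one at a time, checking the exponents against the stated formula.
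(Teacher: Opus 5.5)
The paper only quotes this result from Cochrane and Zheng, so there is no in-paper proof to compare with. Judged on its own, your sketch has a genuine gap at its central step.

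\textbf{The counting step loses the square roots.} After the first-order split you bound $|S(au^n+bu,p^m)|$ by $p^{m/2}$ times the number of critical points modulo $p^{\lfloor m/2\rfloor}$. Your count of the unit ones is $\gcd(n-1,p-1)\,p^{\min\{h,\lfloor m/2\rfloor-\kappa-\varrho-1\}}\,p^{\kappa+\varrho}$. This gives full powers of $p$ exactly where the theorem has square roots.
\begin{itemize}
\item For $\kappa=\varrho=0$ you get $p^{\min\{h,\lfloor m/2\rfloor-1\}}$, where the theorem has $p^{\frac12\min\{h,m-2\}}$.
\item For odd $p$ and $h=0<\kappa+\varrho<\lfloor m/2\rfloor$, you get the lifting factor $p^{\kappa+\varrho}$. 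The theorem allows only $p^{\frac12\min\{1,\kappa\}}\gcd(b,p^m)^{1/2}=p^{\frac12\min\{1,\kappa\}+\frac12(\kappa+\varrho)}$, because unit critical points force $\operatorname{ord}_p b=\kappa+\varrho$.
\end{itemize}
Your appeal to ``taking square roots via the Gauss-sum normalisation'' corresponds to no step of the argument. For even $m$ no Gauss sum appears at all. For odd $m$, the quadratic coefficient $a\binom n2x^{n-2}$ has $p$-adic order $h+\kappa+\varrho$ when $p$ is odd and $p\nmid x$. So your Gauss sum modulo $p$ degenerates precisely when the refined factors matter.

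A concrete test: take $p=5$, $f(u)=u^6-6u$, $m=4$, so $h=1$, $\kappa=\varrho=0$ and $\gcd(b,p^m)=1$. The critical points modulo $25$ are $u\equiv1,6,11,16,21$, so your bound is $|S|\le125$. The theorem asserts $|S|\le5^{5/2}$, and in fact $|S|=25\sqrt5$. So no refinement of the count can close the gap.

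\textbf{What is missing is cancellation among the critical points in a single residue class modulo $p$.}
\begin{itemize}
\item First remove $\varrho$ exactly. Since $\varrho\le m-2$, the sum vanishes unless $p^\varrho\mid b$. In that case $S(au^n+bu,p^m)=p^\varrho S(a'u^n+b'u,p^{m-\varrho})$ with $a=p^\varrho a'$ and $b=p^\varrho b'$. This accounts precisely for $\varrho$ in both $\min\{h,m-2-\varrho\}$ and $\gcd(b,p^m)^{1/2}$.
\item Then sum over each class $\alpha\bmod p$ separately, instead of counting critical points modulo $p^{\lfloor m/2\rfloor}$. For odd $p$ with $\kappa=\varrho=0$ and $p\nmid\alpha$, the residue of $x^{n-1}$ modulo $p^{h+1}$ depends only on $x\bmod p$.
\item On a class with $p^{h+1}\mid f'(\alpha)$, one gets $f(\alpha+py)-f(\alpha)=p^{h+2}g(y)$ with $g'$ linear modulo $p$ of unit slope. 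Hence for $h\le m-2$ the class contributes $p^{h+1}|S(g,p^{m-h-2})|=p^{(m+h)/2}$ in absolute value.
\end{itemize}
In other words, relative to $p^{m/2}$, a whole cluster of critical points in one class contributes only $p^{h/2}$, via a nondegenerate Gauss sum, rather than its cardinality. The factor involving $\kappa$ needs an analogous analysis.

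\textbf{Two further points.} Your treatment of the class $p\mid u$ also needs repair. With $b=pb'$, the substitution $u=pv$ gives $p\,S(ap^{n-2}v^n+b'v,p^{m-2})$. So $\varrho$ increases by $n-2$, and the required hypothesis for the new sum (now $\varrho+n-2\le m-4$) can fail; those sums must be handled directly. Finally, the factor $2$ for $p=2$ does not come from your first-order Taylor step. With your $t=\lfloor m/2\rfloor$, we have $\binom n2\in\Z$ and $2(m-t)\ge m$, so the quadratic term vanishes modulo $p^m$ for every prime $p$.
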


We also record three classical estimates in the
normalization~\eqref{eq:complete-sum}.

\begin{lemma}[Weil bound {\cite[Lemma~2.2]{baker-chen-shparlinski}}]\label{lem:appendix-weil}
Let $p>k$ be a prime and let $a,b\in\Z$ with $p\nmid a$. Then
\[
 |c_p(a,b)|\le(k-1)p^{-1/2}.
\]
\end{lemma}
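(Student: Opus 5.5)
The plan is the classical Weil route: identify $c_p(a,b)$ with a normalized exponential sum of the polynomial $f(u)=au^k+bu$ over $\mathbb F_p$, and invoke Weil's bound for one-variable additive character sums. By~\eqref{eq:complete-sum},
\[
 c_p(a,b)=\frac1p\sum_{u\in\mathbb F_p}e\!\left(\frac{f(u)}{p}\right),
\]
where $f$ is reduced modulo $p$ and $u$ runs over $\mathbb F_p$. The task is therefore to show
\[
 \left|\sum_{u\in\mathbb F_p}e\!\left(\frac{f(u)}{p}\right)\right|\le(k-1)p^{1/2}.
\]

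First I would check the hypothesis of Weil's theorem: if $f\in\mathbb F_p[u]$ has degree $d$ with $1\le d<p$, then $\bigl|\sum_{u\in\mathbb F_p}\psi(f(u))\bigr|\le(d-1)p^{1/2}$ for every nontrivial additive character $\psi$. Since $p\nmid a$, the reduction of $f$ has exact degree $k$, and $k<p$ by assumption. The case $k=1$ does not occur because $k\ge3$ (indeed $k\ge4$ in the appendix). The condition $k<p$ is exactly what ensures $f$ is not of the Artin--Schreier form $g^p-g+c$, so the Weil bound applies with $d=k$. This yields the unnormalized bound $(k-1)p^{1/2}$.

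Dividing by $p$ then gives $|c_p(a,b)|\le(k-1)p^{-1/2}$. The only nontrivial input is the Weil bound itself, obtained from the Riemann hypothesis for the Artin--Schreier curve $y^p-y=f(x)$, whose genus is $(p-1)(k-1)/2$. I would cite this as standard, as the lemma's attribution to~\cite[Lemma~2.2]{baker-chen-shparlinski} indicates, rather than reprove it.

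The main point needing care is bookkeeping rather than proof: confirming that the degree is exactly $k$ (this is where $p\nmid a$ is used) and that $p>k$ rules out degenerate polynomials. No dependence on $b$ enters the argument; in particular the case $p\mid b$ is also covered, since the bound holds uniformly in $b$.
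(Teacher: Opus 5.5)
Your proposal is correct and takes the same route as the paper, which gives no separate argument and simply cites this as the Weil bound from \cite[Lemma~2.2]{baker-chen-shparlinski}; viewing $c_p(a,b)$ as a normalized sum of $f(u)=au^k+bu$, of exact degree $k$ over $\mathbb F_p$, and applying Weil's estimate $(\deg f-1)p^{1/2}$ is the standard justification. One minor imprecision: the hypothesis Weil's bound actually needs is $p\nmid k$, and $k<p$ is a sufficient condition for it rather than ``exactly'' the condition, but this does not affect your argument.
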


\begin{lemma}[{\cite[Lemma~2.3]{baker-chen-shparlinski}}]\label{lem:appendix-prime-power}
Let $p$ be a prime, let $\nu\ge2$ be an integer, and let $a,b\in\Z$
with $p\nmid a$. Then
\[
 |c_{p^\nu}(a,b)|\le(k-1)p^{-1}.
\]
\end{lemma}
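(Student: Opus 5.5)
The statement to prove is Lemma~\ref{lem:appendix-prime-power}: for $\nu\ge2$ and $p\nmid a$ we have $|c_{p^\nu}(a,b)|\le(k-1)p^{-1}$. It is cited from \cite{baker-chen-shparlinski}, but I would prove it directly by the standard $p$-adic stationary phase (Hensel lifting). The method depends on $\nu$.

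\emph{Case $\nu=2\mu$ even.} Write $u=x+p^{\mu}y$ with $x\bmod p^\mu$ and $y\bmod p^\mu$. Then
\[
au^k+bu\equiv ax^k+bx+p^\mu y\,(kax^{k-1}+b)\pmod{p^{2\mu}},
\]
since the terms of order $p^{2\mu}$ vanish. Summing over $y$ by orthogonality leaves $p^\mu$ times a sum over those $x\bmod p^\mu$ with $kax^{k-1}+b\equiv0\pmod{p^\mu}$. Hence $|c_{p^{2\mu}}(a,b)|\le p^{-\mu}\,\#\{x\bmod p^\mu: kax^{k-1}+b\equiv0\}$. For $p\nmid k$ and $p\nmid b$, the solutions lift uniquely from roots mod $p$, so there are at most $k-1$ of them. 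Since $\mu\ge1$, this gives $(k-1)p^{-1}$. If $p\mid b$ while $p\nmid k$, then $p\mid x$ and there are no solutions unless $p^\mu\mid b$. In that case the congruence is $x^{k-1}\equiv0$, and I would handle it by the substitution $u=pu'$ together with an induction on $\nu$. This reduces the sum to a smaller modulus with an extra factor that is bounded by $p^{-1}$.

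\emph{Case $\nu=2\mu+1$ odd.} Write $u=x+p^{\mu+1}y$. The same linearization gives the condition $kax^{k-1}+b\equiv0\pmod{p^\mu}$ on $x\bmod p^{\mu+1}$. Each such $x$ then carries a residual Gauss sum mod $p$ from the quadratic term. That Gauss sum has modulus $\sqrt p$ when $p\nmid k(k-1)ax^{k-2}$. The count of admissible $x\bmod p^{\mu+1}$ is at most $(k-1)p$. Combining the $p^{\mu}$ from the $y$-sum, the Gauss-sum factor $p^{1/2}$, and the normalization $p^{-2\mu-1}$, the bound comes out to roughly $(k-1)p^{-\mu}\cdot p^{-1/2}\cdot p^{1/2}$, which is $\le(k-1)p^{-1}$ since $\mu\ge1$. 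I would organize this count by pairing $x$ with its reduction mod $p^\mu$.

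The main obstacle is the degenerate primes $p\mid k(k-1)$ and the singular case $p\mid x$. There Hensel lifting fails and the counting must use the $p$-adic valuation of the derivative. I would treat these by peeling off factors of $p$ from $u$, which makes the sum recursive in $\nu$. Because all constants are allowed to depend on $k$, only finitely many small primes are involved. For those, the claimed constant $k-1$ needs care. If the direct count is too lossy, the fallback is to appeal to the classical Hua-type bound, i.e.\ to take the lemma as quoted.
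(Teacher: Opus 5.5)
The paper does not prove this lemma; it quotes \cite[Lemma~2.3]{baker-chen-shparlinski}. Your outline has a genuine gap, and it sits exactly in the case that carries the content. Write $f(u)=au^k+bu$. The hard case is when $f'(u)=kau^{k-1}+b$ vanishes identically modulo $p$, i.e.\ $p\mid k$ and $p\mid b$. There you only gesture at ``peeling off factors of $p$'', and otherwise fall back on quoting the lemma, which is circular.

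Moreover, no recursion can give the constant $k-1$ in this case without further restrictions. Take $k=3$, $p=3$, $\nu=2$, $a=1$, $b=0$. The cubes modulo $9$ are $0,1,-1$, each taken three times, so $c_9(1,0)=\tfrac13\bigl(1+2\cos(2\pi/9)\bigr)\approx0.844>2/3$. The appendix only uses the weaker bound $\ll_k p^{-1}$ in \eqref{eq:prime-power-uniform}. For that, the primes $p\le k$ are trivial because $|c_{p^\nu}(a,b)|\le1$.

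If you want the sharp constant in the appendix's range $k\ge4$, the only nontrivial degenerate case is $p=k$ prime. For $\nu\ge3$, two successive linearizations work: $u=x+p^{\nu-1}y$, then $x=x_0+p^{\nu-2}z$. They reduce the count to the at most $p-1$ roots of $ax_0^{p-1}+b/p$ modulo $p$ and give $(p-1)p^{-1}$. For $\nu=2$, however, one has $ax^p+bx\equiv(a+b)x^p\pmod{p^2}$. This leaves the Heilbronn-sum inequality $\bigl|1+\sum_{x=1}^{p-1}e((a+b)x^p/p^2)\bigr|\le p-1$, which your outline does not touch.

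In the nondegenerate case your route can be completed, but it is heavier than needed and has inaccurate steps. First, unique Hensel lifting of roots of $kax^{k-1}+b$ requires $p\nmid k(k-1)$, not just $p\nmid k$. Second, the claim that $p\mid b$ forces $p^\mu\mid b$ for solvability is false: take $x=p$, $b=-kap^{k-1}$, $\mu\ge k$. Third, with $u=x+p^{\mu+1}y$ the quadratic term already vanishes modulo $p^{2\mu+1}$, so no Gauss sum appears.

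No parity split is needed. Write $u=x+p^{\nu-1}y$ with $x\bmod p^{\nu-1}$ and $y\bmod p$. Since $2(\nu-1)\ge\nu$, we get $f(u)\equiv f(x)+p^{\nu-1}yf'(x)\pmod{p^\nu}$. Summing over $y$ gives $|c_{p^\nu}(a,b)|\le p^{1-\nu}\#\{x\bmod p^{\nu-1}:p\mid f'(x)\}=p^{-1}\#\{x_0\bmod p:f'(x_0)\equiv0\}$. This is at most $(k-1)p^{-1}$ whenever $f'\not\equiv0\pmod p$, which covers every case except $p\mid\gcd(k,b)$.
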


\begin{lemma}[{\cite[Lemma~2.4]{baker-chen-shparlinski}}]\label{lem:appendix-complete}
Let $q\ge2$ be an integer and let $a,b\in\Z$ with $(a,q)=1$. Then
\[
 |c_q(a,b)|\ll_k q^{-1/k}.
\]
\end{lemma}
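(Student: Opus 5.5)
The plan is to prove Lemma~\ref{lem:appendix-complete} in the standard way, following Hua. First reduce to prime powers by multiplicativity. Then bound each prime-power sum by a stationary-phase analysis modulo $p^\nu$. Finally, ensure the implied constant is $1$ for all but finitely many primes, so the product over the prime factors of $q$ stays bounded in terms of $k$.

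\emph{Step 1 (multiplicativity).} Let $q=q_1q_2$ with $(q_1,q_2)=1$. Write $u=u_1q_2+u_2q_1$ with $u_i$ running modulo $q_i$. By the Chinese remainder theorem,
\[
 c_q(a,b)=c_{q_1}\bigl(aq_2^{k-1},b\bigr)\,c_{q_2}\bigl(aq_1^{k-1},b\bigr),
\]
and the new leading coefficients remain coprime to the respective moduli. It therefore suffices to show $|c_{p^\nu}(a,b)|\le C_p\,p^{-\nu/k}$ for $p\nmid a$ and all $\nu\ge1$, with $C_p=1$ for $p>p_0(k)$ and $C_p\ll_k1$ for $p\le p_0(k)$.

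\emph{Step 2 (prime powers with $p\nmid k$).} For $\nu=1$, Lemma~\ref{lem:appendix-weil} gives $(k-1)p^{-1/2}$, which is at most $p^{-1/k}$ once $p$ is large, since $k\ge3$. For $\nu\ge2$, split the sum according to whether $p\nmid u$ or $p\mid u$.

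For $p\nmid u$, use the Hensel/stationary-phase reduction: writing $u=u_0+p^{\lceil\nu/2\rceil}v$ and summing over $v$ first, only $u_0$ with $kau_0^{k-1}+b\equiv0\pmod p$ survive. There are at most $k-1$ such classes, giving a normalized contribution $\ll (k-1)p^{-\nu/2}$. This matches the shape of Lemma~\ref{lem:appendix-prime-power}.

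For $p\mid u$, write $u=pv$. If $\nu\le k$, the phase $ap^ku^k/p^\nu$ vanishes, the $v$-sum is a complete linear sum, and the contribution is at most $p^{-1}\le p^{-\nu/k}$. If $\nu>k$, the $v$-sum vanishes unless $p\mid b$. When $p\mid b$, it reduces to $p^{-(k-1)}c_{p^{\nu-k}}(a,b/p)$, up to normalization.

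Inducting on $\nu$ with the hypothesis $|c_{p^{\mu}}|\le p^{-\mu/k}$ for $\mu<\nu$ gives
\[
 |c_{p^\nu}|\le (k-1)p^{-\nu/2}+p^{-(k-1)}p^{-(\nu-k)/k}\le p^{-\nu/k}
\]
for $p>p_0(k)$. This uses $k\ge3$, so that $p^{-\nu/2}$ and $p^{1-k}$ beat $p^{-\nu/k}$ by a power of $p$.

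\emph{Step 3 (small primes, including $p\mid k$).} For the finitely many $p\le p_0(k)$, repeat the same recursion, accepting a constant depending on $p$ and $k$. When $p\mid k$, the Hensel step must be shifted by $\operatorname{ord}_p(k)$ (lift from $p^{\operatorname{ord}_p(k)+1}$ rather than $p$). The key requirement is that the bound is uniform in $\nu$: the recursion $\nu\mapsto\nu-k$ contracts by the factor $p^{-(k-2)}$ relative to $p^{-\nu/k}$, so the constant does not accumulate across levels.

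I expect Step~3 to be the main obstacle, namely handling $p\mid k$ uniformly in $\nu$ with a degenerate critical-point analysis. I would first try to quote Hua's classical bound $|S(q)|\ll_k q^{1-1/k}$ directly, or Theorem~\ref{thm:cochrane-zheng} for odd $p$ and $p=2$. The latter gives $|c_{p^\nu}|\ll_k p^{-\nu/2+O_k(1)}$, which is at most $C_{p,k}p^{-\nu/k}$ uniformly in $\nu$, and the finitely many primes $p\le p_0(k)$ then contribute only a constant depending on $k$.
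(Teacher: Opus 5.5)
The paper gives no proof of this lemma; it quotes \cite[Lemma~2.4]{baker-chen-shparlinski}. Your fallback of citing a classical $\varepsilon$-free Hua-type bound is therefore exactly what the paper does. Your self-contained argument, however, has a genuine error at its central step. For $\nu>k$, put $u=pv$ and $v=s+p^{\nu-k}t$ with $s\bmod p^{\nu-k}$ and $t\bmod p^{k-1}$. The $t$-sum equals $p^{k-1}$ if $p^{k-1}\mid b$ and $0$ otherwise. Hence the $p\mid u$ part of $c_{p^\nu}(a,b)$ is exactly $p^{-1}c_{p^{\nu-k}}(a,b/p^{k-1})$, not $p^{-(k-1)}c_{p^{\nu-k}}(a,b/p)$.

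The recursion is therefore critical, $p^{-1}\cdot p^{-(\nu-k)/k}=p^{-\nu/k}$, with no $p^{-(k-2)}$ contraction. This is precisely why $q^{-1/k}$ is sharp: for $p\nmid k$ one has $c_{p^k}(a,0)=p^{-1}$ and $c_{p^{2k}}(a,0)=p^{-2}$, whereas your recursion would predict $p^{-k}$ for the latter. With the correct factor, your additive bound becomes $(k-1)p^{-\nu/2}+p^{-\nu/k}>p^{-\nu/k}$. Unrolling it only gives $C_p=1+O_k(p^{-1/2+1/k})$. Since $\sum_p p^{-1/2+1/k}=\infty$, the product $\prod_{p\mid q}C_p$ is unbounded, and you recover only $q^{-1/k+\varepsilon}$.

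The missing observation is that for $p\nmid k$ and $\nu\ge2$ the two pieces are never both nonzero. The $p\mid u$ piece requires $p^{\min\{\nu,k\}-1}\mid b$, hence $p\mid b$. On the other hand, if $p\mid b$ then $kau_0^{k-1}+b\not\equiv0\pmod p$ for $p\nmid u_0$, so the stationary-phase sum over $p\nmid u$ vanishes. Thus $|c_{p^\nu}(a,b)|$ equals one of the two pieces, and the induction closes with $C_p=1$ for large $p$. You also need a quadratic Gauss sum over the last $p$-adic digit when $\nu$ is odd. The Hensel count you describe gives only $(k-1)p^{-(\nu-1)/2}$, which for $k=\nu=3$ equals $(k-1)p^{-\nu/k}$ and is not good enough for $C_p=1$.

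In Step~3 the same misconception recurs. For small $p$, uniformity in $\nu$ does not come from contraction. It comes from the fact that unrolling $|c_{p^\nu}|\le E_\nu+p^{-1}|c_{p^{\nu-k}}|$ gives $p^{-\nu/k}$ times a convergent series $\sum_i p^{-(\nu-ik)(1/2-1/k)}$. This requires the $p\nmid u$ piece to satisfy $E_\mu\ll_{p,k}p^{-\mu/2}$ uniformly in $b$. For $p\mid k(k-1)$ that needs its own argument. One option is to divide $f'$ by $p^{\operatorname{ord}_p(k)}$ and count $(k-1)$-th roots in the unit group modulo $p^e$, which is cyclic, or for $p=2$ a product of two cyclic groups.

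Your fallback through Theorem~\ref{thm:cochrane-zheng} does not supply $|c_{p^\nu}|\ll_k p^{-\nu/2+O_k(1)}$. Its bound carries the factor $\gcd(b,p^\nu)^{1/2}$ and becomes trivial when $p^\nu\mid b$. That is exactly where $|c_{p^\nu}|$ can be as large as $p^{-\nu/k}$, for instance $|c_{p^\nu}(a,0)|=p^{-\nu/k}$ for $p\nmid k$ and $k\mid\nu$. Cochrane--Zheng only handles $b$ with $\gcd(b,p^\nu)$ small; the recursion in $b/p^{k-1}$ is still needed for the remaining $b$.
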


\begin{proposition}\label{prop:fourth-energy}
For every $\delta>0$,
\begin{equation}\label{eq:energy-series}
 \sum_{q=1}^{\infty}\frac{E(q)}{q^\delta}<\infty.
\end{equation}
Consequently, for $Q\ge2$,
\begin{equation}\label{eq:energy-sum}
 \sum_{q\le Q}E(q)\ll_{k,\delta}Q^\delta.
\end{equation}
\end{proposition}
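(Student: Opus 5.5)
We will prove \eqref{eq:energy-series} by showing that $E$ is bounded by a multiplicative function whose values at prime powers are small enough for an Euler product to converge; \eqref{eq:energy-sum} then follows at once, since $\sum_{q\le Q}E(q)\le Q^\delta\sum_{q\le Q}E(q)q^{-\delta}$.

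\emph{Step 1: multiplicativity.} For $q=q_1q_2$ with $(q_1,q_2)=1$, the Chinese remainder theorem gives the standard factorization $c_q(a,b)=c_{q_1}(a_1,b_1)\,c_{q_2}(a_2,b_2)$. Here $a_i,b_i$ are obtained from $a,b$ by multiplying by suitable inverses; for example $u=q_2\bar q_2u_1+q_1\bar q_1u_2$ yields $a_1=a q_2^{k-1}\bar q_2^{\,k}$-type expressions. As $a$ runs over units mod $q$ and $b$ over residues mod $q$, the pairs $(a_1,a_2)$ run over units and $(b_1,b_2)$ over all residues. Hence
\[
 \max_{(a,q)=1}|c_q(a,b)|^4\le\prod_i\max_{(a_i,q_i)=1}|c_{q_i}(a_i,b_i)|^4,
\]
and summing over $b$ shows that $E(q_1q_2)\le E(q_1)E(q_2)$. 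It therefore suffices to prove that
\[
 \prod_p\Bigl(1+\sum_{\nu\ge1}E(p^\nu)p^{-\nu\delta}\Bigr)<\infty,
\]
and for this it is enough to have $\sum_p\sum_{\nu}E(p^\nu)p^{-\nu\delta}<\infty$, with each local factor finite.

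\emph{Step 2: primes.} For $p>k$, Lemma~\ref{lem:appendix-weil} alone gives only $E(p)\le(k-1)^4p^{-1}$, which is not summable against $p^{-\delta}$. We must therefore exploit the sum over $b$. For $b\equiv0$, Lemma~\ref{lem:appendix-weil} bounds the term by $(k-1)^4p^{-2}$. For $b\not\equiv0$ we use $\gcd$ information: the substitution $u\mapsto \lambda u$ shows that $c_p(a,b)$ depends only on the class of $(a,b)$ under $(a,b)\mapsto(a\lambda^k,b\lambda)$. The bound
\[
 \max_a|c_p(a,b)|^4\ll p^{-2}
\]
(Weil) then gives $E(p)\ll_k p\cdot p^{-2}=p^{-1}$. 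This is still not summable, so the argument has to be refined. This is the main obstacle, and I would address it with a fourth-moment identity. Since $\max_a\le\sum_a$ is too lossy, the better route is to note that for $b\ne0$, rescaling by $\lambda=b^{-1}$ gives $c_p(a,b)=c_p(ab^{-k},1)$. Consequently
\[
 E(p)\le (k-1)^4p^{-2}+(p-1)\max_{a}|c_p(a,1)|^4 .
\]
For this to be summable we would need $|c_p(a,1)|\ll p^{-1/2-\epsilon'}$, which is false in general. So instead I expect the intended reading to require $\sum_p E(p)p^{-\delta}$ over primes with $E(p)\ll p^{-1}$. This diverges only logarithmically, yet $\sum_p p^{-1-\delta}<\infty$. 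Indeed the $p^{-\delta}$ weight saves us: $E(p)\ll_k p^{-1}$ suffices. Small primes $p\le k$ are finitely many and are handled by the trivial bound $E(p)\le p$.

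\emph{Step 3: higher prime powers.} For $\nu\ge2$, we split the sum over $b$ according to $g=\gcd(b,p^\nu)=p^j$. Theorem~\ref{thm:cochrane-zheng}, with $\varrho=0$ and $h,\kappa$ bounded in terms of $k$, gives
\[
 |c_{p^\nu}(a,b)|\ll_k p^{-\nu/2}p^{j/2},
\]
with $p^{O_k(1)}$ losses only for the finitely many $p\mid k(k-1)$. The number of $b$ with $\gcd=p^j$ is at most $p^{\nu-j}$, so
\[
 E(p^\nu)\ll_k\sum_j p^{\nu-j}p^{-2\nu+2j}=p^{-\nu}\sum_{j\le\nu}p^{j}.
\]
This is too large when $j$ is near $\nu$, so for large $j$ we instead use Lemma~\ref{lem:appendix-prime-power}, which gives $|c|\le(k-1)p^{-1}$. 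Hence the contribution of those $b$ is at most $p^{\nu-j}(k-1)^4p^{-4}$. Combining the two bounds via $\min$ gives $E(p^\nu)\ll_k \nu\,p^{-2}$ roughly for $p\nmid k(k-1)$. Then $\sum_p\sum_{\nu\ge2}E(p^\nu)p^{-\nu\delta}$ converges. For the finitely many exceptional primes, Lemma~\ref{lem:appendix-complete} gives
\[
 E(p^\nu)\ll p^{\nu(1-4/k)},
\]
which is not summable in $\nu$ when $k>4$. For those primes we therefore use Cochrane–Zheng with the bounded $p^{O(1)}$ loss in the same manner, obtaining $E(p^\nu)\ll_{k,p}\nu$. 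The series $\sum_\nu\nu p^{-\nu\delta}$ then converges, so each local factor is finite and the Euler product converges.
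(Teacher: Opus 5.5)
Your argument is correct and is essentially the paper's: CRT multiplicativity of $E$, the Weil bound $E(p)\le(k-1)^4p^{-1}$ for $p>k$, and Cochrane--Zheng combined with the $(k-1)p^{-1}$ prime-power bound after grouping $b$ by $\gcd(b,p^\nu)$. Your opening claim in Step~2 that $E(p)\le(k-1)^4p^{-1}$ is not summable against $p^{-\delta}$ is false, since $\sum_p p^{-1-\delta}<\infty$, so that detour should simply be deleted; and the one real difference is minor: you sum over $\nu\ge2$ using $E(p^\nu)\ll_k\nu p^{-2}$ together with the weight $p^{-\nu\delta}$, whereas the paper also uses Lemma~\ref{lem:appendix-complete} to get $E(p^\nu)\ll_k p^{-2\max\{1,\nu/k\}}$, summable in $\nu$ without the weight (the Cochrane--Zheng losses at $p\mid k(k-1)$ are in fact bounded in terms of $k$ alone, so those primes need no separate treatment).
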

\begin{proof}
Write $\Z_q=\Z/q\Z$ and
$\Z_q^\times=\{a\bmod q:(a,q)=1\}$.
Since $E(1)=1$, take coprime integers
$q_1,q_2\ge2$ and choose
$u_2q_2\equiv1\pmod{q_1}$ and $u_1q_1\equiv1\pmod{q_2}$.
The Chinese remainder theorem gives
\[
 c_{q_1q_2}(a,b)=c_{q_1}(au_2,bu_2)c_{q_2}(au_1,bu_1).
\]
Put $a_1\equiv au_2$, $b_1\equiv bu_2\pmod{q_1}$ and
$a_2\equiv au_1$, $b_2\equiv bu_1\pmod{q_2}$.
By the Chinese remainder theorem, $a\mapsto(a_1,a_2)$ is a bijection from
$\Z_{q_1q_2}^\times$ to $\Z_{q_1}^\times\times\Z_{q_2}^\times$.
Thus, for each fixed $b\in\Z_{q_1q_2}$,
\[
 \max_{a\in\Z_{q_1q_2}^\times}
 |c_{q_1q_2}(a,b)|^4
 =\prod_{i=1}^2
 \max_{a_i\in\Z_{q_i}^\times}
 |c_{q_i}(a_i,b_i)|^4.
\]
Likewise, $b\mapsto(b_1,b_2)$ is a bijection from $\Z_{q_1q_2}$ to
$\Z_{q_1}\times\Z_{q_2}$, so
\[
 E(q_1q_2)
 =\prod_{i=1}^2\left(
 \sum_{b_i\in\Z_{q_i}}
 \max_{a_i\in\Z_{q_i}^\times}
 |c_{q_i}(a_i,b_i)|^4\right)
 =E(q_1)E(q_2).
\]
Let $p$ be a prime and let $\nu\ge1$ be an integer.
By multiplicativity, it suffices to obtain suitable bounds for $E(p^\nu)$.
For $\nu=1$ and $p>k$, Lemma~\ref{lem:appendix-weil} gives
\[
 E(p)\le p\bigl((k-1)p^{-1/2}\bigr)^4=(k-1)^4p^{-1}.
\]
For the finitely many primes $p\le k$, the trivial bound gives $E(p)\le p$.

Now suppose $\nu\ge2$.
For $p\nmid a$, Lemmas~\ref{lem:appendix-prime-power}
and~\ref{lem:appendix-complete} give the uniform bound
\begin{equation}\label{eq:prime-power-uniform}
 |c_{p^\nu}(a,b)|\ll_k\min\{p^{-1},p^{-\nu/k}\}=p^{-H_\nu},
 \qquad H_\nu=\max\{1,\nu/k\}.
\end{equation}
To retain the dependence on $b$, apply Theorem~\ref{thm:cochrane-zheng}
with $n=k$, $m=\nu$ and $\varrho=0$.
Since
\[
 \gcd(k-1,p-1)\le k-1,\qquad
 p^{\operatorname{ord}_p(k)}\le k,\qquad
 p^{\operatorname{ord}_p(k-1)}\le k-1,
\]
division by $p^\nu$ gives, for every prime $p$,
\begin{equation}\label{eq:cz}
 |c_{p^\nu}(a,b)|
 \ll_k p^{-\nu/2}\gcd(b,p^\nu)^{1/2}.
\end{equation}

We group the residues $b$ by their greatest common divisor with $p^\nu$.
For $0\le j\le\nu$, set
\[
 B_j=\{b\bmod p^\nu:\gcd(b,p^\nu)=p^{\nu-j}\}.
\]
Then $B_0=\{0\}$ and $|B_j|\le p^j$. For $b\in B_j$,~\eqref{eq:cz}
gives $|c_{p^\nu}(a,b)|\ll_k p^{-j/2}$, uniformly for $p\nmid a$.
Combining~\eqref{eq:prime-power-uniform} with~\eqref{eq:cz}
and splitting the sum at $j=2H_\nu$ gives
\begin{align}
 E(p^\nu)
 &=\sum_{j=0}^\nu\sum_{b\in B_j}
 \max_{\substack{a\bmod p^\nu\\p\nmid a}}|c_{p^\nu}(a,b)|^4\notag\\
 &\ll_k\sum_{j=0}^\nu
 p^j\min\{p^{-4H_\nu},p^{-2j}\}\notag\\
 &\ll_k\sum_{0\le j\le2H_\nu}p^{j-4H_\nu}
       +\sum_{j>2H_\nu}p^{-j}
 \ll p^{-2H_\nu}
 =p^{-2\max\{1,\nu/k\}}.\label{eq:prime-power-energy}
\end{align}
Summing separately over $2\le\nu\le k$ and $\nu>k$ gives
\begin{equation}\label{eq:prime-power-tail}
 \sum_{\nu\ge2}E(p^\nu)
 \ll_k\sum_{\nu=2}^{k}p^{-2}
       +\sum_{\nu>k}p^{-2\nu/k}
 \ll_k p^{-2}.
\end{equation}
Combining the two cases, for every $\delta>0$ and $p>k$,
\[
 \begin{aligned}
 \sum_{\nu\ge1}\frac{E(p^\nu)}{p^{\nu\delta}}
 &=\frac{E(p)}{p^\delta}
   +\sum_{\nu\ge2}\frac{E(p^\nu)}{p^{\nu\delta}}\\
 &\ll_k p^{-1-\delta}+p^{-2-2\delta}.
 \end{aligned}
\]
The right-hand side is summable over $p$. For the finitely many remaining
primes $p\le k$, the bound $E(p)\le p$ and~\eqref{eq:prime-power-tail}
ensure convergence of the local series. Multiplicativity now gives
\[
 \sum_{q=1}^{\infty}\frac{E(q)}{q^\delta}
 =\prod_{p\ \mathrm{prime}}
 \left(1+\sum_{\nu\ge1}\frac{E(p^\nu)}{p^{\nu\delta}}\right)
 <\infty.
\]
Finally,
\[
 \sum_{q\le Q}E(q)
 \le Q^\delta\sum_{q\le Q}\frac{E(q)}{q^\delta}
 \ll_{k,\delta}Q^\delta.
\]
\end{proof}

The series in~\eqref{eq:energy-series} diverges when $\delta=0$. For any
$a\in\Z_q^\times$, orthogonality and Cauchy--Schwarz give
\[
 E(q)\ge\sum_{b\bmod q}|c_q(a,b)|^4
 \ge\frac1q\left(\sum_{b\bmod q}|c_q(a,b)|^2\right)^2
 =\frac1q,
\]
and hence $\sum_{q\le Q}E(q)\gg\log(2Q)$.

\begin{corollary}\label{cor:major-fourth}
For every fixed $\tau>0$ and every $\delta>0$,
\begin{equation}\label{eq:major-fourth}
 \left\|\sup_{t\in\Maj_\tau(M)}|S_M^w(\cdot,t)|\right\|_{L^4(\T)}
 \ll_{k,w,\tau,\delta}M^{3/4+3\tau/4+\delta/4}.
\end{equation}
\end{corollary}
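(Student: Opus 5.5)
The plan is to rerun the proof of Proposition~\ref{prop:major-maximal} with $r=4$, stopping at the fixed-denominator bound~\eqref{eq:fixed-denominator}. The only change is the sum over $q$: there we use Proposition~\ref{prop:fourth-energy} instead of the pointwise bound~\eqref{eq:complete-bound}.

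First, the derivation of~\eqref{eq:fixed-denominator} only needed $r>2$ and $A>1$, together with Lemma~\ref{lem:integral-estimates}. Part (i) of that lemma holds for every $r>2$, and part (ii) is uniform for $1\le q\le C_0M$. Neither step used $k\ge5$; they rely only on the Poisson formula~\eqref{eq:poisson}, which is valid for any $k\ge2$. So for $k\ge4$, $r=4$ and each $1\le q\le C_0M$ we obtain
\[
 \int_{\T}\sup_{\substack{(a,q)=1\\|\beta|\le C_0M^{1-k+\tau}/q}}|S_M^w(x,a/q+\beta)|^4\,dx
 \ll M^{3+3\tau}E(q)+O_A(M^{-A}),
\]
where $E(q)$ is exactly the quantity in~\eqref{eq:fourth-energy}. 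For $q=1$ the sum over $b$ has a single term equal to $1$, which matches the convention $E(1)=1$.

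Next, $\Maj_\tau(M)$ is covered by these arcs for $q\le C_0M$. Hence the supremum over $\Maj_\tau(M)$ is bounded by the maximum over $q$ of the fixed-$q$ suprema, and the fourth power of that maximum is at most the sum over $q$. Integrating and summing over $q\le C_0M$ with~\eqref{eq:energy-sum}, taking $Q=C_0M$, gives
\[
 \int_{\T}\sup_{t\in\Maj_\tau(M)}|S_M^w(x,t)|^4\,dx
 \ll M^{3+3\tau}(C_0M)^{\delta}+O_A(M^{1-A})\ll_{k,w,\tau,\delta}M^{3+3\tau+\delta}.
\]
The error term is harmless once we fix $A=2$. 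Taking fourth roots then yields~\eqref{eq:major-fourth}, and bounded $M$ are absorbed into the implicit constant by the trivial bound.

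The main thing to check is that the fixed-denominator step really does produce $\sum_b\max_a|c_q(a,b)|^4$ rather than a cruder quantity, and that the range $k\ge4$ causes no trouble. Both points hold: the periodicity $c_q(a,b+jq)=c_q(a,b)$ and the supremum over $a$ inside the $b$-sum are handled exactly as in the proof of Proposition~\ref{prop:major-maximal}, and the choice of $C_1$ in Lemma~\ref{lem:integral-estimates} depends only on $k$. Beyond this, the argument is bookkeeping.
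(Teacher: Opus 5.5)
Your proposal is correct and follows the paper's own proof of this corollary. You rerun the derivation of \eqref{eq:fixed-denominator} with $r=4$, noting that it needs only $r>2$ and holds for $k\ge4$; you then sum over $q\le C_0M$, replacing the pointwise bound \eqref{eq:complete-bound} with the energy bound \eqref{eq:energy-sum}, and take $A=2$ to make the error term harmless.
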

\begin{proof}
The derivation of~\eqref{eq:fixed-denominator} uses Poisson summation and
Lemma~\ref{lem:integral-estimates}, both valid for $k\ge4$, and requires
only $r>2$. Taking $r=4$ and $A=2$, summing over $q\le C_0M$, and applying
Proposition~\ref{prop:fourth-energy}, we obtain
\[
 \int_{\T}\sup_{t\in\Maj_\tau(M)}|S_M^w(x,t)|^4\,dx
 \ll M^{3+3\tau}\sum_{q\le C_0M}E(q)+O(M^{-1})
 \ll M^{3+3\tau+\delta}.
\]
Taking fourth roots proves~\eqref{eq:major-fourth}.
\end{proof}
Interpolating~\eqref{eq:major-fourth} with the trivial $L^\infty$ bound
$O_w(M)$ gives
\[
 \left\|\sup_{t\in\Maj_\tau(M)}|S_M^w(\cdot,t)|\right\|_{L^r(\T)}
 \ll_{k,w,r,\tau,\delta}M^{1-1/r+(3\tau+\delta)/r},
 \qquad 4\le r<\infty.
\]
Taking $\tau,\delta>0$ with $3\tau+\delta\le4\varepsilon$
in~\eqref{eq:major-fourth} gives the bound $M^{3/4+\varepsilon}$.
The power $3/4$ is sharp up to $M^\varepsilon$. To see this, let
$\mathcal D=\mathcal D_{M,2M}$ be the completion kernel used in Section~\ref{sec:proof}.
Geometric summation gives $\|\mathcal D\|_{L^{4/3}(\T)}\ll M^{1/4}$, while
$w=1$ on $[1,2]$ and orthogonality give
\[
 \int_{\T}\mathcal D(x)S_M^w(x,0)\,dx
 =\lfloor2M\rfloor-\lfloor M\rfloor\asymp M.
\]
Since $0\in\Maj_\tau(M)$, H\"older's inequality yields
\begin{equation}\label{eq:major-fourth-lower}
 \left\|\sup_{t\in\Maj_\tau(M)}|S_M^w(\cdot,t)|\right\|_{L^4(\T)}
 \ge\|S_M^w(\cdot,0)\|_{L^4(\T)}\gg M^{3/4}.
\end{equation}

\end{document}